\documentclass[11pt]{amsart}
\usepackage {amsmath, amssymb, amscd, mathrsfs, amsthm, stmaryrd,  bbm, diagbox, enumerate, slashed, graphicx, color, subfig, transparent, comment, float}
\usepackage[dvipsnames]{xcolor}
\usepackage{url, hyperref}
\usepackage{footmisc}
\usepackage{tikz-cd}
\usepackage[margin=1.2in]{geometry}

\newtheorem{theorem}{Theorem}[section]
\newtheorem{proposition}[theorem]{Proposition}
\newtheorem{corollary}[theorem]{Corollary}
\newtheorem{lemma}[theorem]{Lemma}

\newtheorem{definition}[theorem]{Definition}
\theoremstyle{remark}
\newtheorem{example}[theorem]{Example}
\theoremstyle{definition}

\newtheorem {remark}[theorem]{Remark}

\def\interior{\operatorname{int}}

\begin{document}
\title{An RBG construction of integral surgery homeomorphisms}
\author{Qianhe Qin}
\address {Department of Mathematics, Stanford University\\
Stanford, California, 94305, United States of America}
\email {\href{mailto:qqhe@stanford.edu}{qqhe@stanford.edu}}

\begin{abstract}
We generalize the RBG construction of Manolescu and Piccirillo to produce pairs of knots with the same $n$-surgery, and investigate the possibility of constructing exotic definite four-manifolds using $n$-surgery homeomorphisms.
\end{abstract}

\maketitle

\section{Introduction}

In \cite{MP21}, Manolescu and Piccirillo introduced RBG links, a kind of $3$-component framed links in $S^3$ that produce knot pairs with the same $0$-surgery. (Similar constructions appeared earlier in Akbulut's work \cite{akbulut_1993},\cite{akbulut_1977}.) RBG links are relevant for an approach to constructing exotic definite 4-manifolds. The strategy is to find a knot pair $(K,K^{\prime})$, such that $S_0^3(K)\cong S_0^3(K^{\prime})$, $K$ is $H$-slice in some 4-manifold $W$ (i.e. bounds a null-homologous disk in $W\setminus \interior B^4$) and $K^{\prime}$ is not $H$-slice in $W$. Then, one can construct a new 4-manifold $W^{\prime}$ (an exotic copy of $W$) by carving out a neighborhood of the slice disk bounded by $K$, and gluing back the trace of $0$-surgery on $K^{\prime}$ using some $0$-surgery homeomorphism.

In \cite{MP21}, they focused on a class of RBG links called special RBG links, and experimented on a $6$-parameter family of RBG links. They used Rasmussen's $s$-invariant to obstruct $K^{\prime}$ from being $H$-slice in $W$, and collected several knots $K$ where the usual invariants obstructing $H$-sliceness vanish. Later on, however, Nakamura showed that these knots $K$ are not slice. He developed a method in \cite{Kai} to stably relate the traces of $K$ and $K'$, and obstruct $K$ from being $H$-slice using $s(K^{\prime})\neq 0$. 

In this paper, for $n\in\mathbb{Z}$, we generalize RBG links to $|n|$-RBG links, which can be used to produce knot pairs $(K,J)$ such that $S_l^3(K)\cong S^3_{m}(J)$ with $l,m\in \{n,-n\}$.

\begin{definition}
\label{def:RBG}
An \emph{$|n|$-RBG link} $L=\{(R,r),(B,b),(G,g)\}$ is a $3$-component framed link in $S^3$, with framings $r\in\mathbb{Q}$ and $b,g\in\mathbb{Z}$, together with homeomorphisms $\psi_B: S^3_{r,g}(R,G)\rightarrow S^3$ and $\psi_G: S^3_{r,b}(R,B)\rightarrow S^3$, such that $H_1(S^3_{r,b,g}(R,B,G); \mathbb{Z})=\mathbb{Z} /n\mathbb{Z}$. 
\end{definition}
\begin{remark}
A certain type of $|n|$-RBG links was defined in the context of Legendrian knots in \cite[Definition 3.19]{CEK}.
\end{remark}
\begin{theorem}
\label{thm:RBG}
Any $|n|$-RBG link $L=\{(R,r),(B,b),(G,g)\}$ has an associated knot pair $(K_B, K_G)$ and a homeomorphism $\phi_L: S^3_{f_b}(K_B) \rightarrow S^3_{f_g}(K_G)$ with $f_b,f_g\in\{n,-n\}$. Conversely, given a homeomorphism $\phi: S_l^3(K)\rightarrow S^3_{m}(J)$ with $l,m\in\{n,-n\}$, there exists an $|n|$-RBG link $L_{\phi}$ such that the associated knot pair is $(K,J)$ and $\phi_{L_{\phi}}=\phi$ up to isotopy.
\end{theorem}

As in \cite{MP21}, one can attempt to use $n$-surgery homeomorphisms to construct exotic $4$-manifolds. A knot $K$ is said to be \emph{$n$-slice} in $W$, if $K\subset \partial (W\setminus \interior B^4)$ bounds a properly embedded disk $D$ with self-intersection number $-n$. If there is another knot $K^{\prime}$ with $S^3_n(K)\cong S^3_n(K^{\prime})$, by removing a tubular neighborhood of $D$ and gluing back the trace of $n$-surgery on $K^{\prime}$, we obtain a new $4$-manifold $W^{\prime}$ such that $K^{\prime}$ is $n$-slice in $W^{\prime}$.

For $W=\#^l \overline{\mathbb{CP}^2}$, we have that $W^{\prime}$ is homeomorphic to $W$. If $K^{\prime}$ is not $n$-slice in $W$, then $W^{\prime}$ is not diffeomorphic to $W$. Moreover, there is an adjunction inequality for the Rasmussen's $s$-invariant for knots which are $n$-slice in $W$; this was conjectured in \cite{ff} and was proved by Ren in \cite{Fpq}. Thus, one can try to use the $s$-invariant and a pair of knots with the same $n$-surgery to construct an exotic $\#^l \overline{\mathbb{CP}^2}$.

\subsection{n-special RBG links}
We define $n$-special RBG links, for which the associated knot pairs are easier to find diagrammatically and the associated knots have the same $n$-surgery. 

\begin{definition}
\label{def:special}
A link $L=\{(R,r),(B,b),(G,g)\}$ with linking matrix $M_L$, is called an \emph{$n$-special RBG link}, if 
 $b=g=0$, $n=-det(M_L)$, and there exist link isotopies 
 $$R\cup B\cong R\cup \mu_R \cong R\cup G,$$where $\mu_R$ is the meridian of $R$.
\end{definition}

As in \cite{MP21}, one can experiment on parametrized families of $n$-special RBG links to look for knot pairs that share the same $n$-surgery, such that one of the knots is $n$-slice in some $\#^l \overline{\mathbb{CP}^2}$ and the other is not $n$-slice in any  $\#^m \overline{\mathbb{CP}^2}$. 

In \cite{dd}, Nakamura showed that for special RBG links such that $R=U$ and $K_B$ is $H$-slice in $W$, Rasmussen's $s$-invariant cannot be used to obstruct $K_G$ from being $H$-slice. We generalize Nakamura's theorem to $n$-special RBG links.

\begin{theorem}
\label{thm:sinv}
Let $L=\{(R,r),(B,0),(G,0)\}$ be an $n$-special RBG link with $n$ nonnegative. 
\begin{enumerate}[(a)]
\item  If $R$ is $r$-slice in some $\#^m {\mathbb{CP}^2}$ and $K_B$ is $n$-slice in $\#^l \overline{\mathbb{CP}^2}$, then $s(K_G)\le n-\sqrt{n}$.

\item If $R$ is $(r-1)$-slice in some $\#^m \overline{\mathbb{CP}^2}$, then $s(K_G)\le n+1-\sqrt{n+1}$.
\end{enumerate}
\end{theorem}

If $n > 0$, the above theorem leaves open the possibility of using the $s$-invariant to detect exotic pairs of definite $4$-manifolds from $n$-special RBG links where $R$ is the unknot. For example, we have the following:
\begin{theorem}
\label{potential3}
If the knot $K(-2,1,3)$ from the left-hand side of Figure \ref{intro} is $3$-slice in some $\#^m \overline{\mathbb{CP}^2}$, then there exists an exotic $\#^m \overline{\mathbb{CP}^2}$.
\end{theorem}

Theorem \ref{thm:sinv} also gives a new way of obstructing knots from being $n$-slice, by finding another knot with the same $n$-surgery and checking its $s$-invariant. (This generalizes the $n=0$ method, which was used by Piccirillo in her proof that the Conway knot is not slice in \cite{Conwaylisa}, and then extended by Nakamura \cite{Kai}.) For example, we prove the following theorem.
\begin{figure}[h]
{
   \fontsize{9pt}{11pt}\selectfont
   \def\svgwidth{4in}
   \begin{center}
\begingroup%
  \makeatletter%
  \providecommand\color[2][]{%
    \errmessage{(Inkscape) Color is used for the text in Inkscape, but the package 'color.sty' is not loaded}%
    \renewcommand\color[2][]{}%
  }%
  \providecommand\transparent[1]{%
    \errmessage{(Inkscape) Transparency is used (non-zero) for the text in Inkscape, but the package 'transparent.sty' is not loaded}%
    \renewcommand\transparent[1]{}%
  }%
  \providecommand\rotatebox[2]{#2}%
  \newcommand*\fsize{\dimexpr\f@size pt\relax}%
  \newcommand*\lineheight[1]{\fontsize{\fsize}{#1\fsize}\selectfont}%
  \ifx\svgwidth\undefined%
    \setlength{\unitlength}{513.5724742bp}%
    \ifx\svgscale\undefined%
      \relax%
    \else%
      \setlength{\unitlength}{\unitlength * \real{\svgscale}}%
    \fi%
  \else%
    \setlength{\unitlength}{\svgwidth}%
  \fi%
  \global\let\svgwidth\undefined%
  \global\let\svgscale\undefined%
  \makeatother%
  \begin{picture}(1,0.50086667)%
    \lineheight{1}%
    \setlength\tabcolsep{0pt}%
    \put(0,0){\includegraphics[width=\unitlength,page=1]{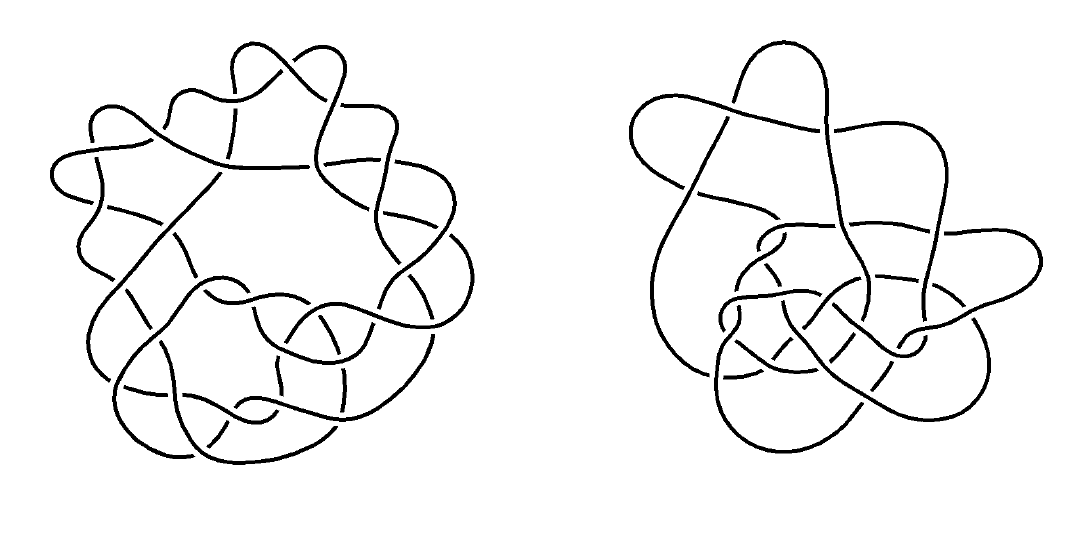}}%
    \put(0.17575014,0.00462935){\color[rgb]{0,0,0}\makebox(0,0)[lt]{\lineheight{1.25}\smash{\begin{tabular}[t]{l}$K(-2,1,3)$\end{tabular}}}}%
    \put(0.69560772,0.00462931){\color[rgb]{0,0,0}\makebox(0,0)[lt]{\lineheight{1.25}\smash{\begin{tabular}[t]{l}$K(-2,1,2)$\end{tabular}}}}%
  \end{picture}%
\endgroup%

   \end{center}
   \caption{}
   \label{intro}
}
\end{figure}
\begin{theorem} 
\label{thm:2slice}
The knots in Figure \ref{intro} are not $2$-slice in any $\#^m \overline{\mathbb{CP}^2}$.
\end{theorem}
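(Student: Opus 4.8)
The plan is to run, with $k=2$, the $k$-slice obstruction scheme described just above. The crucial input is the adjunction inequality for the Rasmussen invariant that was conjectured in \cite{ff} and proved by Ren in \cite{Fpq}: if a knot $K$ is $2$-slice in some $\#^m\overline{\mathbb{CP}^2}$, then $s(K)\le 2-\sqrt2$, hence $s(K)\le 0$ since $s$ is always even. Consequently, to prove the theorem it is enough to show that the $s$-invariant of each of the two knots in Figure \ref{intro} is at least $2$. Moreover, since these two knots form the associated knot pair of a $2$-special RBG link and therefore (Definition \ref{def:special}) share their $2$-surgery, one can instead establish the lower bound $s\ge2$ for just one of them, say the more tractable $K_B(-2,1,2;0)$: then $K_B(-2,1,2;0)$ is not $2$-slice directly by Ren's inequality, while $K_G(-2,1,-2;1)$ is not $2$-slice by Theorem \ref{thm:sinv} with $k=2$, applied with $B$ and $G$ interchanged (which is legitimate because these components appear symmetrically in Definition \ref{def:RBG}), once the mild hypothesis there on the red component $R$ has been checked.

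Concretely, I would proceed as follows. First, read off the diagrams of $K_G(-2,1,-2;1)$ and $K_B(-2,1,2;0)$ from Figure \ref{intro} and simplify them, using Reidemeister moves on the knots and Rolfsen twists and handle slides at the level of the RBG link, aiming to present at least one of them as the closure of a positive or strongly quasipositive braid, or to identify it with a knot in the tables. Second, bound the $s$-invariant from below: for any diagram $D$, Rasmussen's slice-Bennequin inequality gives $s(K)\ge w(D)-O(D)+1$, where $w$ is the writhe and $O$ the number of Seifert circles, so I would look for a diagram with $w(D)-O(D)+1\ge2$ (for a positive braid closure this is automatic and computes $s$ exactly); alternatively read $s$ off the tables, transfer a bound along a genus-$h$ cobordism using $|s(K)-s(K')|\le 2h$, or, if none of this works, compute $s$ by machine from the Lee or Bar-Natan deformation of Khovanov homology, as was done for the knots in \cite{aa}. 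Third, verify from the link diagram that $R$ is an unknot, so that the required sliceness of $R$ in a connected sum of copies of $\mathbb{CP}^2$ or of $\overline{\mathbb{CP}^2}$ holds automatically, and invoke the appropriate part of Theorem \ref{thm:sinv}. Finally, combine these: $s\ge2$ contradicts $s\le 2-\sqrt2$ for any $2$-slice knot in $\#^m\overline{\mathbb{CP}^2}$, and it contradicts the conclusion of Theorem \ref{thm:sinv} that would hold if the partner knot were $2$-slice; hence neither knot in Figure \ref{intro} is $2$-slice in any $\#^m\overline{\mathbb{CP}^2}$.

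The main obstacle is the second step: effectively computing, or at least bounding from below, the $s$-invariant of a concrete knot produced by an RBG diagram. If the simplified diagram turns out to be (quasi)positive or recognizable this is routine, but otherwise one is pushed into a non-obvious cobordism argument or a computer calculation, and in either case one must be careful that the diagram really represents the knot the construction outputs — the framed-link bookkeeping involved in passing from the RBG link to its associated knot pair, and in pinning down the homeomorphism $S^3_2(K_G(-2,1,-2;1))\cong S^3_2(K_B(-2,1,2;0))$, is the easiest place to slip. A secondary point requiring care is matching framing conventions so that the hypothesis on $R$ in Theorem \ref{thm:sinv} is verified for the correct value $k=2$ and the correct case.
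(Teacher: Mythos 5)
Your toolbox (Ren's adjunction inequality for $s$ together with Theorem \ref{thm:sinv}) is the right one, but the way you deploy it cannot succeed, for two concrete reasons. First, your plan hinges on proving $s\ge 2$ for at least one of the two knots in Figure \ref{intro}, so that Ren's inequality obstructs its $2$-sliceness directly. But both knots in Figure \ref{intro} have $s=0$ (and $\tau=0$); this is recorded in Examples \ref{eg:luckylink} and \ref{sdifferent}. So no slice--Bennequin estimate, positivity argument, or computer calculation will yield $s\ge 2$ --- the bound is simply false --- and the direct adjunction inequality is vacuous for these knots. That is exactly what makes the theorem nontrivial: the paper notes after the proof that it knows no argument avoiding the RBG machinery. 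Second, your reduction from two knots to one assumes that the two knots of Figure \ref{intro} form the associated pair of a single $2$-special RBG link and hence share their $2$-surgery. The paper makes no such claim, and it is not the case in the construction: $K_G(-2,1,-2;1)$ arises from the $3$-special link $L(-2,1,-2;1)$ (its partner there is $K9\_533$), while $K_B(-2,1,2;0)$ arises from the $2$-special link $L(-2,1,2;0)$ (its partner there is again $K9\_533$, not the left-hand knot of Figure \ref{intro}).

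The missing idea is that the obstruction must run entirely through a partner knot whose $s$-invariant is large, not through the knot being obstructed. For each knot $K$ in Figure \ref{intro} one exhibits an auxiliary $2$-special RBG link with $R=U$ and $r=0$ in which $K$ appears as one member of the associated pair and whose \emph{other} member $K'$ satisfies $s(K')=2>2-\sqrt{2}$; Theorem \ref{thm:sinv}(a), applied with the roles of $B$ and $G$ interchanged where necessary, then shows that $2$-sliceness of $K$ in some $\#^m\overline{\mathbb{CP}^2}$ would force $s(K')\le 2-\sqrt{2}$, a contradiction. Concretely, the paper uses $L(-2,1,-3;0)$ for the left knot, after observing that $K_G(-2,1,-2;1)=K_G(-2,1,-3;0)$ and computing $s(K_B(-2,1,-3;0))=2$, and it uses $L(-2,1,2;0)$ for the right knot, whose green partner $K9\_533$ has $s=2$. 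Your proposal as written would stall at the step ``establish $s\ge 2$ for one of the Figure \ref{intro} knots.''
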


\subsection{n-peculiar RBG links}

We will also define a different class of $|n|$-RBG links called \emph{$n$-peculiar RBG links} (see Section \ref{npeculiar}), for which the red components $R$ are rationally framed and the associated knot pairs can be obtained diagrammatically. 

\begin{definition}
\label{def:pecu}
A link $\{(R,r),(B,b),(G,g)\}$ is called an $n$-peculiar RBG link, if there exists $t\in\mathbb{Z}$ such that
\begin{itemize}
\item $R=U$ and $B,G$ are meridians of $R$,
\item $b=g= 1/r + 1/t$, 
\item $n=(g+b-2l) - t(l-b)^2$, 
\end{itemize}
where $l=lk(B,G)$ under an orientation of $L$ such that $lk(B,R)=lk(G,R)=1$.
\end{definition}
This gives a new construction of RBG links when $n=0$, for which Nakamura's obstruction in \cite{Kai} does not immediately apply; so, in principle, they can potentially produce exotic $4$-spheres.

\medskip
{\bf Organization of the paper.} In Section \ref{nsurg}, we generalize the RBG construction of zero surgeries to integral surgeries. We discuss the construction of a potential exotic pair by cutting and pasting of $n$-traces. In Section \ref{nspecial}, we introduce $n$-special RBG links, for which the associated knot pairs can be obtained diagrammatically. In Section \ref{nsinv}, we generalize Nakamura's sliceness obstruction to an $n$-sliceness obstruction using $n$-special RBG links, and we give examples where the $n$-sliceness of knots in $\#^m \overline{\mathbb{CP}^2}$ is obstructed by this new method. In Section \ref{npeculiar}, we discuss $n$-peculiar RBG links.

\medskip
{\bf Acknowledgements.} The author is grateful to her advisor, Ciprian Manolescu, for his suggestion to work on this project and his continued guidance and support. We would also like to thank Anthony Conway, Nathan Dunfield, Kyle Hayden, Maggie Miller, Lisa Piccirillo, Qiuyu Ren, Ali Naseri Sadr and Charles Stine for helpful suggestions and conversations.

\section{$\langle n\rangle$-surgery homeomorphisms}
\label{nsurg}
Let $n$ be an integer and $|n|$ be its absolute value. We use $\langle n\rangle$ to denote the set $\{n,-n\}$.
{
\renewcommand{\thetheorem}{\ref{def:RBG}}
\begin{definition}
An \emph{$|n|$-RBG link} $L=\{(R,r),(B,b),(G,g)\}$ is a $3$-component framed link in $S^3$, with framings $r\in\mathbb{Q}$ and $b,g\in\mathbb{Z}$, together with homeomorphisms $\psi_B: S^3_{r,g}(R,G)\rightarrow S^3$ and $\psi_G: S^3_{r,b}(R,B)\rightarrow S^3$, such that $H_1(S^3_{r,b,g}(R,B,G); \mathbb{Z})=\mathbb{Z} /n\mathbb{Z}$. 
\end{definition}
\addtocounter{theorem}{-1}
}

\begin{remark}
In \cite{MP21}, RBG links are defined to be rationally framed for 0-surgeries. In the case of $\pm n$-surgeries with $n\neq 0$, we restrict the framings $b,g$ of $B,G$ to be integers, so that we can pin down the surgery coefficient to $\pm n$ from the homological condition: $H_1(S^3_{r,b,g}(R,B,G); \mathbb{Z})=\mathbb{Z} /n\mathbb{Z}$.  
\end{remark}

\begin{definition}
Given a pair of framed knots $\{(K,f_K),(J,f_J)\}$, a homeomorphism $\phi: S_{f_K}^3(K)\rightarrow S^3_{f_J}(J)$ is called an \emph{$\langle n\rangle$-surgery homeomorphism}, if $f_K,f_J\in \{n,-n\}$. If $f_K=f_J=n$, then we call $\phi$ an \emph{$n$-surgery homeomorphism}.
\end{definition}

We generalize Theorem 1.2 of \cite{MP21} to $|n|$-RBG links as in the following theorem, which is a rephrasing of Theorem \ref{thm:RBG} from the introduction.

\begin{theorem}
Any $|n|$-RBG link $L=\{(R,r),(B,b),(G,g)\}$ has an associated knot pair $(K_B, K_G)$ and an $\langle n\rangle$-surgery homeomorphism $\phi_L: S^3_{f_b}(K_B) \rightarrow S^3_{f_g}(K_G)$. Conversely, given an $\langle n\rangle$-surgery homeomorphism $\phi: S_l^3(K)\rightarrow S^3_{m}(J)$, there exists an $|n|$-RBG link $L_{\phi}$ such that the associated knot pair is $(K,J)$ and $\phi_{L_{\phi}}=\phi$ up to isotopy.
\end{theorem}

\begin{proof}

Given an $|n|$-RBG link $L=\{(R,r),(B,b),(G,g)\}$, we associate to it two framed knots $(K_B,f_b), (K_G,f_g)$ and an $\langle n\rangle$-surgery homeomorphism $\phi_L$ as follows: First, let $(K_B,f_b)$ be $\psi_B(B,b)$, and let $(K_G,f_g)$ be $\psi_G(G,g)$. Since a homeomorphism maps an integer framing to an integer framing, $f_b,f_g$ are integers. Then, extend $\psi_B$ (resp. $\psi_G$) to $\widetilde{\psi}_B: S^3_{r,b,g}(R,B,G) \rightarrow S^3_{f_b}(K_B)$ (resp. $\widetilde{\psi}_G: S^3_{r,b,g}(R,B,G) \rightarrow S^3_{f_g}(K_G)$) by gluing back tubular neighborhoods of $B$ and $K_B$ (resp. $G$ and $K_G$) according to the framings. Since $H_1(S^3_{r,b,g}(R,B,G); \mathbb{Z})=\mathbb{Z} /n\mathbb{Z}$, we have $f_b,f_g\in\{n,-n\}$. Finally, define $\phi_L$ as 
$$\phi_L:=\widetilde{\psi}_G\circ\widetilde{\psi}_B^{-1}:S^3_{f_b}(K_B)\rightarrow S^3_{f_g}(K_G).$$

Conversely, given an $\langle n\rangle$-surgery homeomorphism $\phi: S_l^3(K)\rightarrow S^3_{m}(J)$, define an $|n|$-RBG link $L_{\phi}$ as follows. 

Fix set-wise representatives of $S_l^3(K),S^3_{m}(J)$ by specifying the knots $K,J\subset S^3$ and the surgery tubular neighbourhoods $\nu(K),\nu(J)$. Pick a meridian $\mu_J\subset S^3\backslash \nu(J)$. Up to isotopy, we can assume that $\phi^{-1}$ maps $\mu_J$ into $S^3\backslash \nu(K)$. Choose a tubular neighborhood $Q$ of $J$ which contains $\mu_J$, and pick a tubular neighborhood $N$ of $\mu_J$ such that $N\subset Q\backslash \nu(J)$ and $\phi^{-1}(N)\subset S^3\backslash \nu(K)$. Pick a meridian $\mu_{\mu_J}$ of $\mu_J$ in $N$. (See Figure \ref{NQ}.) Let $L_{\phi}$ be $\{\phi^{-1}(\mu_J,0),(K,l),(\phi^{-1}(\mu_{\mu_J}),0)\}$.

\begin{figure}[h]
{
   \fontsize{9pt}{11pt}\selectfont
   \def\svgwidth{2.5in}
   \begin{center}
   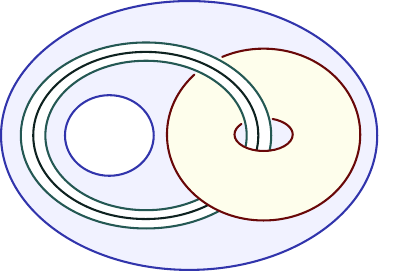
   \end{center}
   \caption{}
   \label{NQ}
}
\end{figure}
Let $\widetilde{N}$ be the manifold obtained by surgery on $N$ along $\{(\mu_{J},0),(\mu_{\mu_J},0)\}$, and let $\widetilde{Q}$ be the manifold obtained by surgery on $Q$ along $\{(\mu_{J},0),(J,m),(\mu_{\mu_J},0)\}$.
Extend $\phi:S_l^3(K)\rightarrow S^3_{m}(J)$ to $\widetilde{\phi}: S^3_{l,r,0}(K,R,G)\rightarrow S^3_{m,0,0}(J,\mu_{J},\mu_{\mu_J})$. Let $\widetilde{\psi}_B$ be the slam-dunk map (as in Figure 5.30 in \cite{kirbybook}) in $\widetilde{\phi}^{-1}(\widetilde{N})$ and the identity map on $S^3_l(K)\backslash \phi^{-1}(N)$. 
\begin{figure}[h]
{
   \fontsize{9pt}{11pt}\selectfont
   \def\svgwidth{2.5in}
   \begin{center}
\begingroup%
  \makeatletter%
  \providecommand\color[2][]{%
    \errmessage{(Inkscape) Color is used for the text in Inkscape, but the package 'color.sty' is not loaded}%
    \renewcommand\color[2][]{}%
  }%
  \providecommand\transparent[1]{%
    \errmessage{(Inkscape) Transparency is used (non-zero) for the text in Inkscape, but the package 'transparent.sty' is not loaded}%
    \renewcommand\transparent[1]{}%
  }%
  \providecommand\rotatebox[2]{#2}%
  \newcommand*\fsize{\dimexpr\f@size pt\relax}%
  \newcommand*\lineheight[1]{\fontsize{\fsize}{#1\fsize}\selectfont}%
  \ifx\svgwidth\undefined%
    \setlength{\unitlength}{203.56605271bp}%
    \ifx\svgscale\undefined%
      \relax%
    \else%
      \setlength{\unitlength}{\unitlength * \real{\svgscale}}%
    \fi%
  \else%
    \setlength{\unitlength}{\svgwidth}%
  \fi%
  \global\let\svgwidth\undefined%
  \global\let\svgscale\undefined%
  \makeatother%
  \begin{picture}(1,0.6388769)%
    \lineheight{1}%
    \setlength\tabcolsep{0pt}%
    \put(0,0){\includegraphics[width=\unitlength,page=1]{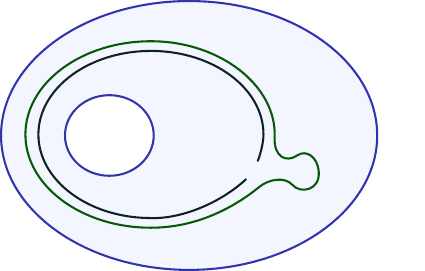}}%
    \put(0.75586615,0.02333059){\color[rgb]{0,0.01568627,0.59215686}\transparent{0.80000001}\makebox(0,0)[lt]{\lineheight{1.25}\smash{\begin{tabular}[t]{l}$Q$\end{tabular}}}}%
    \put(0.65923162,0.45048033){\color[rgb]{0.93333333,0.16470588,0.16470588}\makebox(0,0)[lt]{\lineheight{1.25}\smash{\begin{tabular}[t]{l}$(\mu_J,0)$\end{tabular}}}}%
    \put(0.62634676,0.1389437){\color[rgb]{0.03137255,0.34509804,0.02352941}\makebox(0,0)[lt]{\lineheight{1.25}\smash{\begin{tabular}[t]{l}$(\mu_{\mu_J},0)$\end{tabular}}}}%
    \put(0.32903218,0.5642391){\color[rgb]{0.05490196,0.13333333,0.1254902}\makebox(0,0)[lt]{\lineheight{1.25}\smash{\begin{tabular}[t]{l}$(J,m)$\end{tabular}}}}%
    \put(0,0){\includegraphics[width=\unitlength,page=2]{nqslide.pdf}}%
    \put(0.32811554,0.09976699){\color[rgb]{0.05490196,0.13333333,0.1254902}\makebox(0,0)[lt]{\lineheight{1.25}\smash{\begin{tabular}[t]{l}$m$\end{tabular}}}}%
  \end{picture}%
\endgroup%

   \end{center}
   \caption{}
   \label{nqslide}
}
\end{figure}

Let $\eta$ be the composition map ${\phi}\circ \widetilde{\psi}_B\circ (\widetilde{\phi})^{-1}$, which is identity outside of $\widetilde{N}$. Slide $\mu_{\mu_J}$ over $J$ and cancel the pair $(J,\mu_J)$ in $\widetilde{Q}$ (as in Figure \ref{nqslide}). Together, they induce a homeomorphism $\psi$ which is identity on $S^3\backslash Q$. Let $\widetilde{\psi}_G$ be $\psi\circ \widetilde{\phi}$. Thus, we have the following commutative diagram:
\begin{center}
\begin{tikzcd}
 & S^3_{r,l,0}(R,B,G) \arrow[r, "\widetilde{\psi}_B"] \arrow[d, "\widetilde{\phi}"] \arrow[dl,"\widetilde{\psi}_G", bend right=20]
    & S_l^3(K) \arrow[d, "\phi"] \\
S^3_{m}(J) & S^3_{0,m,0}(\mu_J,J,\mu_{\mu_J}) \arrow[r, "\eta"] \arrow[l, "\psi"]
& S^3_{m}(J) 
\end{tikzcd}
\end{center}

Since $\psi\circ\eta^{-1}$ is identity outside of ${Q}$ and $MCG(S^1\times D^2, S^1\times S^1)$ is trivial, $\psi\circ\eta^{-1}$ is isotopic to the identity. Therefore, $\widetilde{\psi}_G \circ \widetilde{\psi}_B^{-1}$ is isotopic to $\phi$. Undo the surgery on $B$ (resp. $G$), we obtain $\psi_B$ (resp. $\psi_G$) from $\widetilde{\psi}_B$ (resp. $\widetilde{\psi}_G$).
\end{proof}

\begin{example}
Consider a homeomorphism $\phi$ between the $1$-surgery on the figure-eight knot $K$ and the $(-1)$-surgery on the right-handed trefoil $J$ in Figure \ref{KA}, which is an analogue of Figure 23 in \cite{KA}.

\begin{figure}[h]
{
   \fontsize{9pt}{11pt}\selectfont
   \def\svgwidth{3.5in}
   \begin{center}
   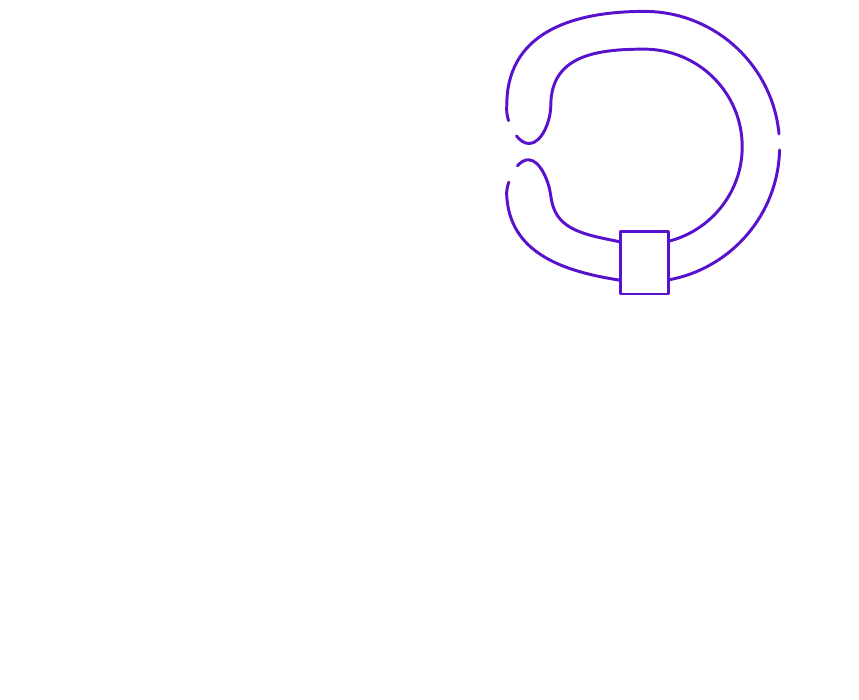
   \end{center}
   \caption{}
   \label{KA}
}
\end{figure}
We can construct the corresponding $|1|$-RBG link (Figure \ref{pmlink}) by chasing the image of $(\mu_J,0)$ under the map $\phi^{-1}$. 

\begin{figure}[h]
{
   \fontsize{9pt}{11pt}\selectfont
   \def\svgwidth{1.8in}
   \begin{center}
\begingroup%
  \makeatletter%
  \providecommand\color[2][]{%
    \errmessage{(Inkscape) Color is used for the text in Inkscape, but the package 'color.sty' is not loaded}%
    \renewcommand\color[2][]{}%
  }%
  \providecommand\transparent[1]{%
    \errmessage{(Inkscape) Transparency is used (non-zero) for the text in Inkscape, but the package 'transparent.sty' is not loaded}%
    \renewcommand\transparent[1]{}%
  }%
  \providecommand\rotatebox[2]{#2}%
  \newcommand*\fsize{\dimexpr\f@size pt\relax}%
  \newcommand*\lineheight[1]{\fontsize{\fsize}{#1\fsize}\selectfont}%
  \ifx\svgwidth\undefined%
    \setlength{\unitlength}{186.56903711bp}%
    \ifx\svgscale\undefined%
      \relax%
    \else%
      \setlength{\unitlength}{\unitlength * \real{\svgscale}}%
    \fi%
  \else%
    \setlength{\unitlength}{\svgwidth}%
  \fi%
  \global\let\svgwidth\undefined%
  \global\let\svgscale\undefined%
  \makeatother%
  \begin{picture}(1,0.73324045)%
    \lineheight{1}%
    \setlength\tabcolsep{0pt}%
    \put(0,0){\includegraphics[width=\unitlength,page=1]{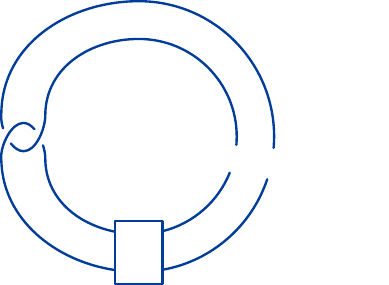}}%
    \put(0.3429329,0.05933123){\color[rgb]{0,0.23921569,0.6}\makebox(0,0)[lt]{\lineheight{1.25}\smash{\begin{tabular}[t]{l}$1$\end{tabular}}}}%
    \put(0.68187809,0.64398933){\color[rgb]{0,0.23921569,0.6}\makebox(0,0)[lt]{\lineheight{1.25}\smash{\begin{tabular}[t]{l}$(B,1)$\end{tabular}}}}%
    \put(0.32902905,0.36949257){\color[rgb]{0.76078431,0,0.12941176}\makebox(0,0)[lt]{\lineheight{1.25}\smash{\begin{tabular}[t]{l}$(R,1)$\end{tabular}}}}%
    \put(0,0){\includegraphics[width=\unitlength,page=2]{pmlink.pdf}}%
    \put(0.80877885,0.21530284){\color[rgb]{0.05098039,0.52156863,0}\makebox(0,0)[lt]{\lineheight{1.25}\smash{\begin{tabular}[t]{l}$(G,0)$\end{tabular}}}}%
  \end{picture}%
\endgroup%

   \end{center}
   \caption{}
   \label{pmlink}
}
\end{figure}

\end{example}

For the rest of the paper, we will mostly be concerned with $n$-surgery homeomorphisms (rather than those that change an $n$-surgery to a $(-n)$-surgery).

\subsection{$n$-slice knots from $n$-surgery homeomorphisms}
\label{nslicehomo}
Let $X$ be a smooth, closed, oriented 4-manifold, and let $X^{\circ}=X\backslash int(B^4)$. Let $K$ be a knot in $\partial X^{\circ}\cong S^3$. Suppose $K$ bounds a properly embedded disk $D$ in $X^{\circ}$. There exists a tubular neighborhood $\nu(D)\cong D^2\times D^2$, where $D$ is identified with $D^2\times \{0\}$. Pick a point $p \in \partial D^2$ and denote $S^1\times \{p\}$ by $K_D$. Following Section 2.2 of \cite{Kai}, we make the following definition.
\begin{definition}
The knot $K\subset \partial X^{\circ}$ is \emph{$n$-slice}, if $lk(K,K_D)=n$ in $\partial X^{\circ}$.
\end{definition}
Denote $X^{\circ}\backslash \nu(D)$ by $E(D)$ and denote the trace of the $n$-surgery along $K$ by $X_n(K)$. By the trace embedding lemma \cite[Lemma 3.3]{traceemb}, if $K$ is $n$-slice in $X$, then $-X_n(K)$ is smoothly embedded in $X$. In particular, we have that $[D]\cdot [D]=-n$. 

Now, given an $n$-surgery homeomorphism $\phi: S_n^3(K)\rightarrow S^3_n(J)$, we define 
$$X_{(D,\phi)}=-X_{n}(J)\cup_{\phi} E(D).$$

For $n=0$, if the disk $D$ is not null-homologous, then it is possible that $X_{(D,\phi)}$ is not homeomorphic to $X$. Note that if $X$ is definite, such as $\#^m\mathbb{CP}^2$, every $0$-slice disk is null-homologous. 

\begin{example}(\cite{cc} Example 5.3)
Let $X$ be $S^2\times S^2$. Since $S^2\times D^2 \cong X_0(U)$, $S^2\times D^2$ is the exterior of some disk in $X^{\circ}$. Let $\phi: S_0^3(U)\rightarrow S_0^3(U)$ be a homeomorphism that maps the $0$-framed meridian of $U$ to a $1$-framed meridian of $U$. Then $X_{(D,\phi)}\cong \mathbb{CP}^2 \# \overline{\mathbb{CP}^2}$.
\end{example}

However, for $X=\#^m\overline{\mathbb{CP}^2}$, we have that $X_{(D,\phi)}$ is homeomorphic to $X$.
\begin{proposition}\label{prop:definite}
Let $\phi:S_n^3(K)\rightarrow S_n^3(J)$ be an $n$-surgery homeomorphism and $D$ be an $n$-slice disk bounded by $K$. If $X$ is simply-connected and negative definite with $n\neq 0$, then $X_{(D,\phi)}$ is homeomorphic to $X$.
\end{proposition}
\begin{proof}
Denote the intersection pairing of a 4-manifold $M$ on $H_2(M;\mathbb{Q})$ by $Q_M$. 
Since $n\neq 0$, we have $H_2(S_n^3(K);\mathbb{Q}) = H_1(S_n^3(K);\mathbb{Q}) = 0$. Thus, by the Mayer–Vietoris sequence for $X$, $H_2(X;\mathbb{Q})\cong H_2(E(D);\mathbb{Q})\oplus H_2(-X_n(K);\mathbb{Q})$. Similarly, for $X_{(D,\phi)}$, we have $H_2(X_{(D,\phi)};\mathbb{Q})\cong H_2(E(D);\mathbb{Q})\oplus H_2(-X_n(J);\mathbb{Q})$.
Thus, $Q_X$ is isomorphic to $Q_{E(D)}\oplus Q_{-X_n(K)}$ over $\mathbb{Q}$, and $Q_{X_{(D,\phi)}}$ is isomorphic to $Q_{E(D)}\oplus Q_{-X_n(J)}$ over $\mathbb{Q}$. Since $Q_{X_n(K)}\cong Q_{X_n(J)}\cong (n)$ and $X$ is negative definite, we have that $X_{(D,\phi)}$ is also negative definite. By Donaldson's theorem, the intersection forms of $X$ and $X_{(D,\phi)}$ are diagonalizable over $\mathbb{Z}$. As in the proof of Lemma 3.3 in \cite{MP21} for the case $n=0$,  $X_{(D,\phi)}$ is simply-connected. Hence, by Freedman's theorem $X_{(D,\phi)}$ is homeomorphic to $X$.
\end{proof}

\begin{proposition}
The knot $J$ is $n$-slice in $X_{(D,\phi)}$.
\end{proposition}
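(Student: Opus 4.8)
The plan is to read off the required disk for $J$ from the trace summand of $X_{(D,\phi)}$, in the same way that $K$ is $n$-slice in $X$ via $D$. Write the $n$-trace as $X_n(J)=B^4\cup_{J,n}h$, where $h$ is a $2$-handle attached along $J$ with framing $n$, so that the core $\Delta$ of $h$ is a disk with $\partial\Delta=J$. Since $X_{(D,\phi)}=-X_n(J)\cup_\phi E(D)$ and $X_{(D,\phi)}$ is closed, this $B^4$ is an embedded $4$-ball in $X_{(D,\phi)}$ with $J\subset\partial B^4$; deleting its interior, $X_{(D,\phi)}\setminus\interior(B^4)$ is a compact $4$-manifold whose boundary $S^3$ contains $J$, and $\Delta$ (which meets $B^4$ only along $\partial\Delta$) is a properly embedded disk in it bounded by $J$. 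So the only thing left to check is that $\Delta$ has self-intersection number $-n$.

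For that I would cap $\Delta$ off with a Seifert surface $F$ of $J$ pushed slightly into $B^4$, forming a closed surface $\widehat\Delta=\Delta\cup F\subset -X_n(J)\subset X_{(D,\phi)}$. Because $F$ lies in the ball $B^4$ it is null-homologous rel boundary, and $\Delta$, $F$ have disjoint interiors, so the self-intersection number of $\Delta$ (computed via this capping) equals $[\widehat\Delta]\cdot[\widehat\Delta]$. In the $n$-trace $X_n(J)$ the intersection form is $(n)$, hence $[\widehat\Delta]\cdot[\widehat\Delta]=n$ there; reversing orientation negates the form, so this number is $-n$ in $-X_n(J)$, and therefore $-n$ computed in $X_{(D,\phi)}$. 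This gives that $J$ is $n$-slice in $X_{(D,\phi)}$.

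The whole argument is short, and really the only point that needs care is the orientation reversal from $X_n(J)$ to $-X_n(J)$, which is responsible for the sign of the self-intersection; equivalently, one can phrase it as the converse of the observation made just before the construction — namely, $J$ is $n$-slice in a closed $4$-manifold $W$ precisely when $-X_n(J)$ embeds in $W$ as a codimension-zero submanifold — which here holds by the definition of $X_{(D,\phi)}$. As a sanity check on the sign, with $J=U$ and $n=1$ we have $-X_1(U)=\overline{\mathbb{CP}^2}\setminus\interior(B^4)$, and there the unknot indeed bounds a disk of square $-1$ (a sphere of square $-1$ with a small disk removed), whereas no such disk exists in the positive-definite $X_1(U)=\mathbb{CP}^2\setminus\interior(B^4)$.
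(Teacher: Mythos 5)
Your proof is correct and is essentially the paper's argument: the paper also removes the $0$-handle $B^4$ of the trace summand $-X_n(J)=X_{-n}(-J)\subset X_{(D,\phi)}$ and takes the core of the $2$-handle as the slice disk. You simply make explicit the self-intersection computation (capping with a Seifert surface and tracking the sign under orientation reversal) that the paper leaves implicit.
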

\begin{proof}
The knot trace is $X_{-n}(-J)=B^4\cup_{(-J,-n)}\{\text{$2$-handle}\}$. Remove the $B^4$ from $X_{(D,\phi)}$, and the core of the 2-handle gives an $n$-slice disk of $J$. 
\end{proof}
\subsection{Extendability over $n$-traces}
For $(X, X_{(D,\phi)})$ to be a potential exotic pair, we need that the $n$-surgery homeomorphism $\phi: S_n^3(K)\rightarrow S_n^3(J)$ does not extend smoothly to an $n$-trace diffeomorphism $\Phi: X_n(K)\rightarrow X_n(J)$. In some cases, one can see that $\phi$ actually extends smoothly over $n$-traces.

\begin{example}\label{eg:trace}
Given an $|n|$-RBG link $L$ such that $(R,r)=(U,0)$, $B$ and $G$ are meridians of $R$ and $\psi_B$ (resp. $\psi_G$) is induced by sliding $B$ (resp. $G$) over $R$ and a slam-dunk. Replacing $(R,r)$ by a dotted circle and doing the same diagram calculus, we obtain a diffeomorphism from $X_n(K_B)$ to $X_n(K_G)$, extending $\phi_L$. Note that $L$ is an $n$-special RBG link with $(R,r) = (U, 0)$ (see Definition \ref{def:special}).
\end{example}

Generalizing Definition 3.12 in \cite{MP21}, we say that an $n$-surgery homeomorphism $\phi: S_n^3(K)\rightarrow S_n^3(J)$ has \emph{property U}, if there exists a choice of surgery diagrams of $S_n(K)$ and $S_n(J)$, such that $\phi$ sends a $0$-framed meridian of $K$ to a $0$-framed curve $\gamma$ which appears unknotted in the diagram of $S_n(J)$. 
\begin{theorem}
If $\phi$ has property U, then there exists a diffeomorphism $\Phi: X_n(K)\rightarrow X_n(J)$ with $\Phi|_{\partial}=\phi$.
\end{theorem}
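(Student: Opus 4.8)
The plan is to convert the diagrammatic condition ``property U'' into an explicit handle decomposition of the trace $X_n(K)$, then perform Kirby moves that simultaneously realize $\phi$ on the boundary and exhibit a diffeomorphism onto $X_n(J)$. First I would fix surgery diagrams of $S^3_n(K)$ and $S^3_n(J)$ witnessing property U, so that $\phi$ carries a $0$-framed meridian $\mu_K$ of $K$ to a $0$-framed unknotted curve $\gamma$ in the diagram of $S^3_n(J)$. The surgery on $\mu_K$ does nothing (a $0$-framed meridian of $K$ can be slam-dunked into $K$, returning $S^3_n(K)$), and likewise the $0$-framed unknot $\gamma$, being unknotted and unlinked after an isotopy, can be blown down — or rather cancelled — against nothing; more precisely, $\gamma$ bounds a disk in the complement of the rest of the diagram, so adding $\gamma$ as a $0$-framed component and then handle-sliding the other components off it and cancelling produces the original diagram of $S^3_n(J)$. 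The key point is that $\phi$, viewed as a homeomorphism between these two enlarged surgery presentations $S^3_{n,0}(K,\mu_K)\cong S^3_{n,0}(J',\gamma)$ (where $J'$ is $J$ possibly after isotopy), is induced by an explicit sequence of Kirby moves: a slam-dunk on one side, an isotopy/handle-slide making $\gamma$ unknotted, and the reverse cancellation on the other side.

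Next I would lift this sequence of $3$-dimensional Kirby moves to the level of $4$-dimensional $2$-handlebodies. Attaching a $2$-handle along $(K,n)$ and a second $2$-handle along $(\mu_K,0)$ to $B^4$ gives a $4$-manifold $W$ whose boundary is $S^3_{n,0}(K,\mu_K)$; since $\mu_K$ is a $0$-framed meridian of $K$, this handle pair is ``cancelling'' in the sense that sliding and handle-cancelling (the $4$-dimensional analogue of the slam-dunk is a handle slide followed by a $1$-$2$ handle cancellation, but here it is cleaner: $(\mu_K,0)$ is a meridian, so we instead blow it down after an isotopy) shows $W$ is diffeomorphic to $X_n(K) = B^4 \cup_{(K,n)} h^2$ — one must be slightly careful here, because a $0$-framed $2$-handle on a meridian is not a cancelling handle in the strict sense; instead one uses that $W \cong X_n(K)\,\#\,(S^2\times S^2)$ or $X_n(K)\,\#\,(S^2\tilde\times S^2)$ is wrong too. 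The correct statement, following Lemma 3.13 of \cite{aa} in the $n=0$ case, is that one should only add the single handle along $(K,n)$, and that $\phi$ extends over the traces precisely because the Kirby-move sequence realizing $\phi$ on the boundary can be run using only handle slides and isotopies (which extend to diffeomorphisms of the trace) together with blow-ups/blow-downs of $\pm 1$-framed unknots split from the rest of the diagram (which change $X_n(K)$ by connect-summing with $\overline{\mathbb{CP}^2}$ or $\mathbb{CP}^2$, contradiction) — so in fact one must ensure no genuine blow-down is needed, which is exactly what property U guarantees: the curve $\gamma$ is $0$-framed, not $\pm 1$-framed, so it is removed by a handle slide plus cancellation, not a blow-down.

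Concretely, the argument runs: start from $X_n(K)$, and note its boundary surgery diagram can be augmented by a $0$-framed meridian $\mu_K$ of $K$ and simultaneously (at the $4$-manifold level) a $0$-framed $2$-handle $h'$ along $\mu_K$; because $\mu_K$ is a meridian of $K$, the handle $h'$ together with an implicit $1$-handle (dual to the belt sphere of $h_K$) cancels, so $X_n(K)\cup h' \cong X_n(K)$ canonically. Now apply the Kirby calculus of property U: handle-slide the components of $S^3_n(J)$'s diagram so that $\gamma=\psi$-image of $\mu_K$ becomes unknotted and split; these slides are $2$-handle slides, hence extend to diffeomorphisms of the $4$-manifold. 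Finally, since $\gamma$ is a $0$-framed split unknot, the $2$-handle along it cancels against a $1$-handle (or is removed by the reverse of the meridian-augmentation), yielding $X_n(J)$. Composing, we obtain a diffeomorphism $\Phi\colon X_n(K)\to X_n(J)$, and by construction $\Phi|_\partial$ agrees with the Kirby-move homeomorphism $S^3_n(K)\to S^3_n(J)$, which is $\phi$ (using that $\phi$ is determined up to isotopy by where it sends $\mu_K$ together with triviality of $MCG(S^1\times D^2, T^2)$, as in the proof of Theorem \ref{thm:RBG}).

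The main obstacle I expect is the bookkeeping in the middle step: showing that the $0$-framed meridian augmentation is a genuinely cancelling modification at the $4$-manifold level (so that it does not alter the trace), and that the handle slides transporting $\mu_K$ to the unknotted $\gamma$ can be chosen to act by handle slides alone — with no hidden blow-downs of $\pm1$-framed curves, which would connect-sum in a $\mathbb{CP}^2$ or $\overline{\mathbb{CP}^2}$ and break the diffeomorphism. This is precisely where the hypothesis that $\gamma$ is $0$-framed (rather than merely ``some framing'') is essential, and it mirrors the role of ``property U'' in \cite{aa}; the rest is a routine translation of their Lemma 3.13 argument from $n=0$ to general $n$, since attaching the $2$-handle along $(K,n)$ versus $(K,0)$ changes nothing in the handle-slide combinatorics away from the framing coefficient on that one component.
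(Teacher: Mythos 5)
There is a genuine gap, and it sits exactly at the step you yourself flag as delicate. The pivotal claim ``$X_n(K)\cup h' \cong X_n(K)$ canonically,'' where $h'$ is a $0$-framed $2$-handle attached along $\mu_K$, is false: the resulting manifold has $b_2=2$, intersection form $\left(\begin{smallmatrix} n & 1 \\ 1 & 0\end{smallmatrix}\right)$, and boundary $S^3_{n,0}(K,\mu_K)\cong S^3$ --- a $0$-framed meridian does not ``do nothing'' under surgery; a slam-dunk shows it \emph{undoes} the surgery on $K$, which is precisely the mechanism behind Definition \ref{def:RBG} --- whereas $X_n(K)$ has $b_2=1$ and boundary $S^3_n(K)$. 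A trace has no $1$-handles, so the ``implicit $1$-handle dual to the belt sphere'' you want to cancel against does not exist. The second load-bearing error is your reading of property U: it asserts only that $\gamma$ appears \emph{unknotted} in the diagram of $S^3_n(J)$, not that it can be made split from $J$. Indeed $\gamma=\phi(\mu_K)$ generates $H_1(S^3_n(J))\cong\mathbb{Z}/n\mathbb{Z}$, so for $|n|>1$ it cannot be isotoped off $J$, and a split $0$-framed unknot would in any case change the boundary by an $S^1\times S^2$ summand rather than ``cancel against nothing.'' With these two steps removed, no diffeomorphism has been produced.

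The mechanism the paper is pointing to (it gives no details beyond citing Theorem 3.13 of \cite{aa}, whose argument is asserted to generalize) is the carving/dotted-circle trick visible in the example and remark surrounding the statement: because $\gamma$ is a $0$-framed \emph{unknot} in the diagram, one trades it for a dotted circle, i.e.\ forms the $4$-manifold $Z$ obtained from $B^4$ by carving out the pushed-in disk bounded by $\gamma$ and then attaching the $2$-handle along $(J,n)$. Now a genuine $1$-handle is present, and the two ways of cancelling it --- against a meridional $2$-handle on one side, and after the handle slides prescribed by property U on the other --- exhibit $Z$ as both $X_n(J)$ and $X_n(K)$; the induced boundary identification is $\phi$ up to the $MCG(S^1\times D^2, S^1\times S^1)$ argument that you correctly invoke at the very end. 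Your ``implicit $1$-handle'' intuition is gesturing at this, but you apply it to the trace itself, where it is not available; the $1$-handle only appears after carving along the disk that property U supplies.
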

\begin{proof}
This is a generalization of Theorem 3.13 in \cite{MP21}.
\end{proof}

For example, all links in Example \ref{eg:trace} have property U.

If the mapping class group of the $n$-surgeries is trivial, then if there exists some $\phi:S_n^3(K)\rightarrow S_n^3(J)$ which does not extend over the trace, then the traces $X_n(K)$ and $X_n(J)$ are not diffeomorphic. In general, it is hard to obstruct extensibility smoothly, but we have obstructions for extending homeomorphically over the traces.

\begin{proposition}\label{thm:mchase}
Let $\phi: S_n^3(K)\rightarrow S_n^3(J)$ be an $n$-surgery homeomorphism, with $n\neq 0$, which induces $f_*: H_1(S_n^3(K)) \rightarrow H_1(S_n^3(J))$.  Then, $f$ extends to a trace homeomorphism $\Phi: X_n(K)\rightarrow X_n(J)$ if and only if $f_*([\mu_K])=\pm[\mu_{J}]$.
\end{proposition}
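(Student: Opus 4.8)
The plan is to analyze the gluing $X_n(K) = S^3 \times [0,1] \cup (\text{2-handle along }(K,n))$ at the level of homology, and to show that an abstract homeomorphism of the boundary extends precisely when it respects the data recorded by the 2-handle attachment, namely the image of the meridian class. First I would recall that $X_n(K)$ is simply-connected with $H_2(X_n(K);\mathbb{Z})\cong\mathbb{Z}$ generated by the core of the 2-handle capped with a Seifert surface, and self-intersection $n$; the boundary inclusion sends a generator of $H_2$ to $n$ times a generator of $H_2(S_n^3(K);\mathbb{Z})\cong\mathbb{Z}/n\mathbb{Z}$, and the meridian $\mu_K$ generates $H_1(S_n^3(K);\mathbb{Z})\cong\mathbb{Z}/n\mathbb{Z}$. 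The necessity direction is then a direct diagram chase: if $\Phi$ extends $f$, then $\Phi_*$ on $H_2$ must carry the core-class generator of $X_n(K)$ to a generator of $H_2(X_n(J);\mathbb{Z})$ (since $\Phi_*$ is an isomorphism preserving the intersection form $(n)$, hence is $\pm 1$), and chasing this through the long exact sequence of the pair $(X_n, \partial X_n)$ — where the connecting map sends the $H_2$-generator to the $H_1$-generator $[\mu]$ of the boundary — forces $f_*([\mu_K]) = \pm[\mu_J]$.

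For sufficiency, I would argue as follows. Form the closed-up (or rather, relative) picture: given $f:S_n^3(K)\to S_n^3(J)$ with $f_*([\mu_K])=\pm[\mu_J]$, I want to build $\Phi$ on the 2-handle. The 2-handle $D^2\times D^2$ is attached along a tubular neighborhood $\nu(K)\subset S_n^3(K)$ of the dual knot (the belt circle is $\mu_K$, i.e. the $0$-framed meridian, and the attaching region is $\partial D^2 \times D^2$ glued to $\nu(\text{dual knot})$). Concretely, $X_n(K)$ is recovered from $S_n^3(K)$ by attaching a 2-handle along the dual knot $\kappa\subset S_n^3(K)$ with an appropriate framing, and $\mu_K$ is isotopic in $S_n^3(K)$ to the meridian of $\kappa$. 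The extension $\Phi$ exists iff $f$ carries the attaching circle $\kappa$ (with its framing) to the attaching circle $\kappa'$ of $X_n(J)$ up to isotopy and orientation. By a standard argument (this is exactly the $n\neq 0$ analogue of the corresponding step in \cite{aa}, using that both $\kappa$ and $\kappa'$ are primitive in $H_1$ only after the meridian identification, and that the surgered manifold together with this 2-handle recovers $S^3$ with a knot complement), one reduces the isotopy statement to the homological condition: the only obstruction to isotoping $f(\kappa)$ to $\kappa'$ that survives is the action on $H_1$, and $f_*([\mu_K]) = \pm[\mu_J]$ is precisely the condition that $f(\mu_\kappa)$ is homologous to $\pm\mu_{\kappa'}$, which (since $\mu_\kappa$ generates $H_1$) is the condition that lets one run the extension. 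Once the attaching data match, one extends over the handle by coning, and checks the framings agree automatically since both traces have intersection form $(n)$.

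The main obstacle I expect is the sufficiency direction: passing from the homological statement $f_*([\mu_K]) = \pm[\mu_J]$ to an actual isotopy of the attaching circle with its framing. The subtlety is that $S_n^3(K)$ need not have trivial mapping class group, so one cannot simply invoke uniqueness of the 2-handle attachment; instead one must use that adding the 2-handle back along $\kappa$ produces $(S^3, K)$ and argue at that level — essentially, that a homeomorphism of the surgered manifold matching meridians extends to a homeomorphism of the knot complements, hence of $S^3$, and then one glues in the solid torus / 2-handle equivariantly. I would structure the argument to isolate exactly this point and cite the $n=0$ precedent in \cite{aa} (Lemma 3.3 and the surrounding discussion), noting that the proof there only uses $n\neq 0$ through the finiteness of $H_1$, which we have here. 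I should also double-check the sign ambiguity: both $\pm[\mu_J]$ occur because orientation-reversal of the core circle and the choice of $\Phi$ versus its composition with a reflection of the handle factor are both available, so the condition is genuinely up to sign and no finer.
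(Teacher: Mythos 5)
Your necessity direction is essentially right (modulo the small slip that $H_2(S_n^3(K);\mathbb{Z})=0$, not $\mathbb{Z}/n\mathbb{Z}$; the chase should run through $H_2(X_n(K))\to H_2(X_n(K),\partial)\to H_1(S_n^3(K))$, where the first map is multiplication by $n$ and the second sends the relative generator to $[\mu_K]$). The sufficiency direction, however, has a genuine gap, and the gap is not a missing detail but a wrong strategy. You propose to show that $f$ carries the attaching circle $\kappa$ of the $2$-handle (the dual knot in $S_n^3(K)$) to the attaching circle $\kappa'$ in $S_n^3(J)$ up to framed isotopy, and then extend over the handle. If that isotopy existed, the resulting extension would be smooth whenever $f$ is, and moreover $S^3\setminus K\cong S_n^3(K)\setminus\kappa$ would be carried homeomorphically to $S_n^3(J)\setminus\kappa'\cong S^3\setminus J$, forcing $K=J$ by Gordon--Luecke. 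Since the entire point of the construction is to produce distinct knots with homeomorphic $n$-surgeries whose traces one hopes are \emph{not} diffeomorphic, the homological condition $f_*([\mu_K])=\pm[\mu_J]$ cannot possibly imply the isotopy of attaching circles; it is far weaker. The proposition is a statement about \emph{topological} extension only, and no handle-by-handle smooth argument can prove it.

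The paper's proof instead invokes Boyer's classification of simply-connected topological $4$-manifolds with prescribed boundary: one lifts $f_*$ to an isometry $\Lambda$ of the intersection forms $(n)$ fitting into the exact sequence $0\to H_2(X_n)\to H_2(X_n,\partial)\to H_1(\partial)\to 0$ (this lift is exactly where $f_*([\mu_K])=\pm[\mu_J]$ is used), observes that Boyer's geometric obstruction $\theta(f,\Lambda)$ vanishes because $n\neq 0$ makes the boundary a rational homology sphere, notes that both traces are smooth so their Kirby--Siebenmann invariants agree, and concludes from Boyer's Corollary~0.8(i) that $(f,\Lambda)$ is realized by a homeomorphism $\Phi$. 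If you want to repair your proof, you should replace the entire attaching-circle argument with this appeal to topological surgery theory; there is no elementary substitute for it in the smooth category.
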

\begin{proof}
If $f_*([\mu_K])=\pm[\mu_{J}]$, then we can lift $f_*$ to an isometry $\Lambda: H_2(X_n(K))\rightarrow H_2(X_n(J))$ such that the following diagram commutes. 
\begin{center}
\begin{tikzcd}[cramped, sep=scriptsize]
  0 \arrow[r]   & H_2(X_n(K))\arrow[r]\arrow[d, "\Lambda"] & H_2(X_n(K),S_n^3(K)) \arrow[r] &H_1(S^3_n(K))\arrow[r]\arrow[d,"f_*"] &0\\
  0 \arrow[r]   & H_2(X_n(J))\arrow[r]                                   & H_2(X_n(K),S_n^3(K))\arrow[u, "\Lambda^*"] \arrow[r] &H_1(S^3_n(K))\arrow[r] &0
 \end{tikzcd}
\end{center}

Since $n\neq 0$, the geometric obstruction $\theta(f,\Lambda)$ vanishes for any morphism $(f,\Lambda)$. Moreover, the Kirby-Siebenmann invariants $\Delta(X_n(K))\equiv \Delta(X_n(J))\equiv 0 \text{ (mod $2$)}$. Then the result follows from \cite{dd} Corollary 0.8 (i).
\end{proof}

\begin{remark}
There exist homeomorphisms of $n$-surgeries that do not map meridian to meridian. For instance, consider an $n$-special RBG link with $l=2$, $r=3$ (cf. Section \ref{nspecial}). One can chase the meridian within the link diagram and construct a homeomorphism $f$ which maps meridian $[\mu_{K_G}]$ to $3[\mu_{K_B}]$. By Proposition \ref{thm:mchase}, we have that $f$ is not extensible over the $n$-trace.
\end{remark}

\begin{proposition}
Let $n$ be an integer such that $\{l|l^2=1\}=\{\pm 1\}\subset \mathbb{Z}/n\mathbb{Z}$.  Every $n$-surgery homeomorphism extends over traces. 
\end{proposition}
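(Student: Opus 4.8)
The plan is to derive the statement directly from Proposition~\ref{thm:mchase}, using the linking form to pin down how an $n$-surgery homeomorphism acts on $H_1$. Throughout we take $n\neq 0$, as in Proposition~\ref{thm:mchase}. Let $\phi\colon S_n^3(K)\to S_n^3(J)$ be an $n$-surgery homeomorphism, with induced isomorphism $f_*\colon H_1(S_n^3(K);\mathbb{Z})\to H_1(S_n^3(J);\mathbb{Z})$. Both groups are cyclic of order $|n|$, generated by the meridians $[\mu_K]$ and $[\mu_J]$, so $f_*([\mu_K])=u\,[\mu_J]$ for a unique unit $u\in(\mathbb{Z}/n\mathbb{Z})^{\times}$. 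By Proposition~\ref{thm:mchase}, it suffices to show that $u\equiv\pm1\pmod n$, and then $f$ (hence $\phi$) extends over traces.

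The key step is to compute $u$ via the linking form $\lambda$. Since the $n$-trace $X_n(K)$ is simply connected with intersection form $(n)$, the linking form on $H_1(S_n^3(K);\mathbb{Z})\cong\mathbb{Z}/n\mathbb{Z}$ is given on the meridional generator by $\lambda_K([\mu_K],[\mu_K])=\varepsilon/n\in\mathbb{Q}/\mathbb{Z}$, where $\varepsilon=\pm1$ is a sign depending only on orientation conventions; the identical computation applies to $J$, with the same $\varepsilon$, because $\lambda$ is determined by the form $(n)$. An orientation-preserving homeomorphism preserves the linking form, so
$$\frac{\varepsilon u^{2}}{n}=\lambda_J\bigl(u[\mu_J],\,u[\mu_J]\bigr)=\lambda_J\bigl(f_*[\mu_K],\,f_*[\mu_K]\bigr)=\lambda_K\bigl([\mu_K],\,[\mu_K]\bigr)=\frac{\varepsilon}{n}\qquad\text{in }\mathbb{Q}/\mathbb{Z},$$
which forces $n\mid u^{2}-1$, i.e.\ $u^{2}=1$ in $\mathbb{Z}/n\mathbb{Z}$. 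By the hypothesis $\{l\mid l^{2}=1\}=\{\pm1\}\subset\mathbb{Z}/n\mathbb{Z}$ we conclude $u\equiv\pm1$, and Proposition~\ref{thm:mchase} finishes the proof.

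I do not expect a genuine obstacle: the substantive input (Boyer's classification of homeomorphisms of $4$-manifolds with boundary and the vanishing of the relevant geometric and Kirby--Siebenmann obstructions) is already packaged into Proposition~\ref{thm:mchase}, so what remains is the short algebraic argument above. The only items needing care are the normalization of the linking form — that it equals $\pm1/n$ on the meridional generator, with a sign common to $S_n^3(K)$ and $S_n^3(J)$ — and the orientation convention: for an orientation-reversing homeomorphism the displayed identity would instead give $n\mid u^{2}+1$, but such homeomorphisms do not arise in the $4$-manifold constructions considered here, so we restrict to the orientation-preserving case.
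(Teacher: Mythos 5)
Your argument is correct and is essentially the paper's own proof: both use that the linking form evaluates to $\pm 1/n$ on the meridional generator (since the $n$-trace has intersection form $(n)$), deduce $u^2\equiv 1\pmod n$ from invariance of the linking form, invoke the hypothesis to get $u\equiv\pm 1$, and conclude via Proposition~\ref{thm:mchase}. Your extra care about the sign $\varepsilon$ and the orientation convention is a harmless refinement of the same computation.
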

\begin{proof}
Consider the linking form $Q$ of $M$. Since $n$-traces have intersection form $(n)$, we have $Q([\mu_K],[\mu_K])=Q([\mu_J],[\mu_J])=1/n $ (mod $\mathbb{Z}$). Let $f_*([\mu_K])=l[\mu_J]$. Since $Q$ is invariant under $f_*$, we have that $l^2=1$ (mod $n$). The result follows from Proposition \ref{thm:mchase}.
\end{proof}
For example, when $n=1,2,4,p^k$ or $2p^k$ with $p$ an odd prime, every $n$-surgery homeomorphism extends over the traces homeomorphically. For other $n$, it is possible that $\phi: S_n^3(K)\rightarrow S_n^3(J)$ does not map meridian to meridian homologically; in such a case, if we also have that the mapping class group of $S_n^3(K)$ is trivial, then we conclude that their $n$-traces $X_n(K)$ and $X_n(J)$ are not homeomorphic.

\section{$n$-special RBG links}
\label{nspecial}
{
\renewcommand{\thetheorem}{\ref{def:special}}
\begin{definition}
A link $L=R\cup B\cup G$, with framings $r,b,g\in \mathbb{Z}$ repectively and a linking matrix $M_L$, is called an \emph{$n$-special RBG link}, if 
\begin{itemize}
\item $b=g=0$,\item there exist link isotopies $R\cup B\cong R\cup \mu_R \cong R\cup G$,\item $n=-det(M_L)$.
\end{itemize}
\end{definition}
\addtocounter{theorem}{-1}
}

We obtain a knot diagram of $(K_G,f_g)$ as follows. Choose a diagram of an $n$-special RBG link $L$. Isotope $L$ such that $B=\mu_R$. Slide $G$ over $R$ such that $G$ does not intersect some meridian disk $\Delta_B$ of $R$ bounded by $B$. Cancel the pair $(B,R)$. Denote the image of $(G,g)$ by $(G',g')$. Similarly, we can obtain $(K_B, f_b)$.

\begin{remark}
Since there is only one orientation preserving homeomorphism of $S^3$ up to isotopy,  in the standard (empty) diagram of $S^3$, the framed knot $(K_G,f_g)$ is isotopic to $(G',g')$ and $(K_B,f_b)$ is isotopic to $(B',b')$.
\end{remark}

Notice that isotopies and slides do not change the determinant of the linking matrix, and a slam-dunk changes the sign of the linking matrix. Therefore, the homeomorphism $\phi_L$ induced by $L$ is an $n$-surgery homeomorphism.

Let $\psi_B$ ($\psi_G$) be the homeomorphism induced by a sequence of Kirby moves.
Note that an $n$-special RBG link, together with $\psi_B$ and $\psi_G$, is an $|n|$-RBG link.

Orient the link $L$ such that $lk(R,G)=lk(R,B)=1$ and denote $lk(B,G)$ by $l$. Then $$n=-detM_L=l(rl-2).$$
We can get arbitrary $n$ by setting $l=\pm 1$ and changing $r$.
\begin{example} \label{eg:m}
Consider a family of special RBG links with $l=-1$ in Figure \ref{6_2a}.

\begin{figure}[h]
{
   \fontsize{9pt}{11pt}\selectfont
   \def\svgwidth{1.8in}
   \begin{center}
\begingroup%
  \makeatletter%
  \providecommand\color[2][]{%
    \errmessage{(Inkscape) Color is used for the text in Inkscape, but the package 'color.sty' is not loaded}%
    \renewcommand\color[2][]{}%
  }%
  \providecommand\transparent[1]{%
    \errmessage{(Inkscape) Transparency is used (non-zero) for the text in Inkscape, but the package 'transparent.sty' is not loaded}%
    \renewcommand\transparent[1]{}%
  }%
  \providecommand\rotatebox[2]{#2}%
  \newcommand*\fsize{\dimexpr\f@size pt\relax}%
  \newcommand*\lineheight[1]{\fontsize{\fsize}{#1\fsize}\selectfont}%
  \ifx\svgwidth\undefined%
    \setlength{\unitlength}{230.93449999bp}%
    \ifx\svgscale\undefined%
      \relax%
    \else%
      \setlength{\unitlength}{\unitlength * \real{\svgscale}}%
    \fi%
  \else%
    \setlength{\unitlength}{\svgwidth}%
  \fi%
  \global\let\svgwidth\undefined%
  \global\let\svgscale\undefined%
  \makeatother%
  \begin{picture}(1,0.60497879)%
    \lineheight{1}%
    \setlength\tabcolsep{0pt}%
    \put(0,0){\includegraphics[width=\unitlength,page=1]{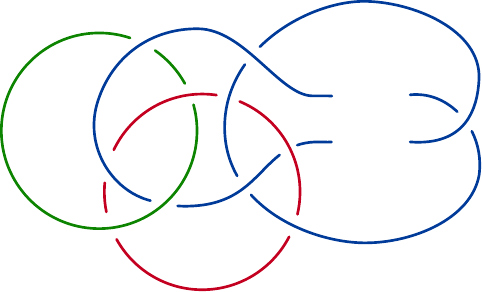}}%
    \put(0.73734436,0.34740447){\color[rgb]{0,0.23137255,0.6}\makebox(0,0)[lt]{\lineheight{1.25}\smash{\begin{tabular}[t]{l}$m$\end{tabular}}}}%
    \put(0,0){\includegraphics[width=\unitlength,page=2]{6_2a.pdf}}%
  \end{picture}%
\endgroup%

   \end{center}
   \caption{A one parameter family of $n$-special RBG links}
   \label{6_2a}
}
\end{figure}
The knot diagram for $K_B$ can be computed by sliding $B$ over $R$ and cancelling the pair $(G,R)$ (see Figure \ref{6_2}). Similarly, to get $K_G$, we isotope the link diagram such that $B$ becomes a circle, and let $\Delta_B$ be the inner domain bounded by $B$. Slide $G$ over $R$ along $\Delta_B$, and cancel the pair $(B,R)$.
\begin{figure}[h]
{
   \fontsize{9pt}{11pt}\selectfont
   \def\svgwidth{3.5in}
   \begin{center}
\begingroup%
  \makeatletter%
  \providecommand\color[2][]{%
    \errmessage{(Inkscape) Color is used for the text in Inkscape, but the package 'color.sty' is not loaded}%
    \renewcommand\color[2][]{}%
  }%
  \providecommand\transparent[1]{%
    \errmessage{(Inkscape) Transparency is used (non-zero) for the text in Inkscape, but the package 'transparent.sty' is not loaded}%
    \renewcommand\transparent[1]{}%
  }%
  \providecommand\rotatebox[2]{#2}%
  \newcommand*\fsize{\dimexpr\f@size pt\relax}%
  \newcommand*\lineheight[1]{\fontsize{\fsize}{#1\fsize}\selectfont}%
  \ifx\svgwidth\undefined%
    \setlength{\unitlength}{438.45197566bp}%
    \ifx\svgscale\undefined%
      \relax%
    \else%
      \setlength{\unitlength}{\unitlength * \real{\svgscale}}%
    \fi%
  \else%
    \setlength{\unitlength}{\svgwidth}%
  \fi%
  \global\let\svgwidth\undefined%
  \global\let\svgscale\undefined%
  \makeatother%
  \begin{picture}(1,0.33295338)%
    \lineheight{1}%
    \setlength\tabcolsep{0pt}%
    \put(0,0){\includegraphics[width=\unitlength,page=1]{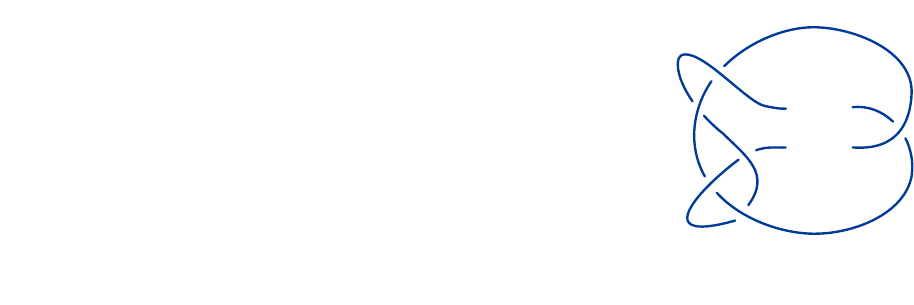}}%
    \put(0.88168846,0.18840072){\color[rgb]{0,0.23137255,0.6}\makebox(0,0)[lt]{\lineheight{1.25}\smash{\begin{tabular}[t]{l}$m$\end{tabular}}}}%
    \put(0,0){\includegraphics[width=\unitlength,page=2]{K_B_1.pdf}}%
    \put(0.33800404,0.21486179){\color[rgb]{0,0.23137255,0.6}\makebox(0,0)[lt]{\lineheight{1.25}\smash{\begin{tabular}[t]{l}$m$\end{tabular}}}}%
    \put(0,0){\includegraphics[width=\unitlength,page=3]{K_B_1.pdf}}%
    \put(0.18733917,0.06027502){\color[rgb]{0,0,0}\makebox(0,0)[lt]{\lineheight{1.25}\smash{\begin{tabular}[t]{l}$r$\end{tabular}}}}%
    \put(0,0){\includegraphics[width=\unitlength,page=4]{K_B_1.pdf}}%
    \put(0.5145166,0.21453447){\color[rgb]{0,0,0}\makebox(0,0)[lt]{\lineheight{1.25}\smash{\begin{tabular}[t]{l}slam-dunk\end{tabular}}}}%
    \put(0.84640306,0.00361824){\color[rgb]{0,0,0}\makebox(0,0)[lt]{\lineheight{1.25}\smash{\begin{tabular}[t]{l}$K_B$\end{tabular}}}}%
    \put(0.1246023,0.00361824){\color[rgb]{0,0,0}\makebox(0,0)[lt]{\lineheight{1.25}\smash{\begin{tabular}[t]{l}slide $B$ over $R$ \end{tabular}}}}%
  \end{picture}%
\endgroup%

   \end{center}
   \caption{Obtaining the diagram of $K_B$ with one slide}
   \label{6_2}
}
\end{figure}
\begin{figure}[h]
{
   \fontsize{9pt}{11pt}\selectfont
   \def\svgwidth{2in}
   \begin{center}
\begingroup%
  \makeatletter%
  \providecommand\color[2][]{%
    \errmessage{(Inkscape) Color is used for the text in Inkscape, but the package 'color.sty' is not loaded}%
    \renewcommand\color[2][]{}%
  }%
  \providecommand\transparent[1]{%
    \errmessage{(Inkscape) Transparency is used (non-zero) for the text in Inkscape, but the package 'transparent.sty' is not loaded}%
    \renewcommand\transparent[1]{}%
  }%
  \providecommand\rotatebox[2]{#2}%
  \newcommand*\fsize{\dimexpr\f@size pt\relax}%
  \newcommand*\lineheight[1]{\fontsize{\fsize}{#1\fsize}\selectfont}%
  \ifx\svgwidth\undefined%
    \setlength{\unitlength}{371.34285274bp}%
    \ifx\svgscale\undefined%
      \relax%
    \else%
      \setlength{\unitlength}{\unitlength * \real{\svgscale}}%
    \fi%
  \else%
    \setlength{\unitlength}{\svgwidth}%
  \fi%
  \global\let\svgwidth\undefined%
  \global\let\svgscale\undefined%
  \makeatother%
  \begin{picture}(1,0.67216621)%
    \lineheight{1}%
    \setlength\tabcolsep{0pt}%
    \put(0,0){\includegraphics[width=\unitlength,page=1]{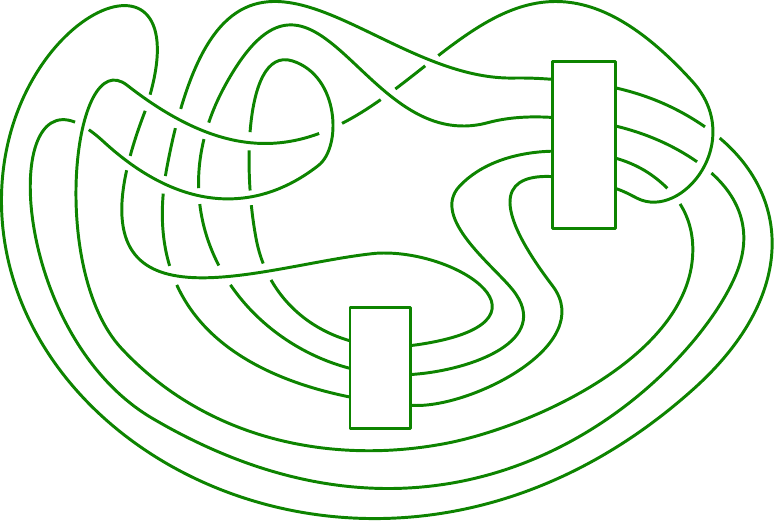}}%
    \put(0.48057876,0.1890843){\color[rgb]{0.05098039,0.51764706,0}\makebox(0,0)[lt]{\lineheight{1.25}\smash{\begin{tabular}[t]{l}$r$\end{tabular}}}}%
    \put(0.73552498,0.48522394){\color[rgb]{0.05098039,0.51764706,0}\makebox(0,0)[lt]{\lineheight{1.25}\smash{\begin{tabular}[t]{l}$m$\end{tabular}}}}%
  \end{picture}%
\endgroup%

   \end{center}
   \caption{The diagram of $K_G$}
}
\end{figure}

If $m=1$, $K_B$ is the figure-eight knot, and $K_G$ is also the figure-eight knot for any $r$.
For $r=-1, m=0$, we have $S_1^3(K_B)\cong S_1^3(K_G)$. Using SnapPy \cite{SnapPy}, we identify $K_B$ as $6_2$, which is the mirror of $6_2$ in Rolfsen knot table, and $K_G$ as $K13n3596$.  This gives an example of small knots which have the same $1$-surgery.
\end{example}

\begin{remark}
By Theorem 3.7 in \cite{bb}, for a knot $K$ with an annulus presentation, one can construct another knot $K^{\prime}$ via the $(*n)$ operation, such that $S_n^3(K)\cong S_n^3(K^{\prime})$. 

In Example \ref{eg:m} with $m=0$, the knot $K_B$ has an annulus presentation as shown in Figure \ref{annulus}. The knot $K^{\prime}$ obtained by applying ($*(-1)$) operation on the mirror of $K_B$, is the mirror of the knot $K_G$ in Example \ref{eg:m}.  

\begin{figure}[h]
{
   \fontsize{9pt}{11pt}\selectfont
   \def\svgwidth{1.3in}
   \begin{center}
   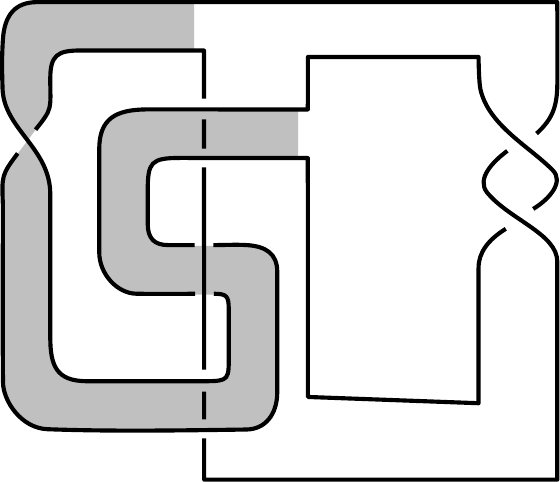
   \end{center}
   \caption{An annulus presentation of $K_B$ in Figure \ref{6_2} with $m=0$}
   \label{annulus}
}
\end{figure}

\end{remark}

There are, however, some cases where an $n$-special RBG link produces identical knots.

\begin{proposition}\label{prop:R}
Let $L=\{(R,r),(B,0),(G,0)\}$ be an $n$-special RBG link. If $(R,r)=(U,0)$ and $R$ bounds a disk $\Delta_R$ such that $|\Delta_R\cap B|=|\Delta_R\cap G|=1$, then $K_B=K_G$.
\end{proposition}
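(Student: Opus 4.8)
The plan is to work directly with the two diagrammatic recipes for $K_B$ and $K_G$ recorded immediately after Definition~\ref{def:special}, after first reinterpreting the hypothesis on $\Delta_R$. Because $\Delta_R$ is an embedded disk with $\partial\Delta_R=R$ meeting $B$ in a single point $p$ and $G$ in a single point $q$ and otherwise disjoint from $B\cup G$, the subsurface obtained by deleting small disk neighbourhoods of $p$ and $q$ from $\Delta_R$ is a pair of pants contained in $S^3\setminus(B\cup G)$, with boundary $R\sqcup\mu_B\sqcup\mu_G$ for meridians $\mu_B$ of $B$ and $\mu_G$ of $G$. Hence $R$ is isotopic, in the complement of $B\cup G$, to a band sum $\mu_B\#_W\mu_G$ along a band $W\subset\Delta_R$; in particular $R$ is simultaneously a meridian of $B$ and of $G$. (The special condition together with $R=U$ already forces $R\cup B$ and $R\cup G$ to each be Hopf links, since $R\cup\mu_R$ is a Hopf link; the extra content of the hypothesis is precisely that a \emph{single} disk $\Delta_R$ realises both of these clasps.)

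Using this, I would compute $K_G=\psi_G(G)$, where $\psi_G\colon S^3_{0,0}(R,B)\to S^3$. Since $R$ is a meridian of $B$, I can isotope $L$ so that $B=\mu_R$ while keeping a disk $\Delta_R$ meeting each of $B$ and $G$ once. One then slides $G$ over $R$ along bands lying in $\Delta_R$ to remove the intersections of $G$ with the meridian disk $\Delta_B$ of $R$ bounded by $B$, and cancels the (now $0$--framed Hopf) pair $(B,R)$; tracking $G$ through this sequence of moves should express $K_G$ as an explicit modification of the knot $G$ built only from the band $W$ and the strand of $G$ piercing $\Delta_R$. Running the identical recipe with the roles of $B$ and $G$ interchanged should express $K_B$ as the corresponding modification of $B$.

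To conclude, I would compare the two modifications. Outside a bicollar $\Delta_R\times[-1,1]$ of $\Delta_R$ the link $B\cup G$ is a pair of disjoint arcs in the complementary $3$--ball, each running from $\Delta_R\times\{-1\}$ to $\Delta_R\times\{+1\}$; both $K_B$ and $K_G$ should be obtained from these two arcs by reconnecting them through the bicollar according to the same ``$W$--pattern'', and I expect the ambient isotopy of $S^3$ that rotates the bicollar by $\pi$ and is the identity on its complement to carry one reconnection to the other, giving $K_B\cong K_G$ in $S^3$.

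The step I expect to be the main obstacle is cancelling $(B,R)$ while controlling $G$. With $b=g=0$ the pair $(B,R)$ is a $0$--framed Hopf link, which is not a handle--cancelling pair and for which a direct slam--dunk divides by zero; reducing its surgery diagram to the empty diagram of $S^3$ requires auxiliary moves — for instance blowing up a $(\pm 1)$--framed meridian of $R$, performing a slam--dunk or Rolfsen twist, and blowing back down — and one must keep careful track of $G$ throughout to certify that the resulting modification of $G$ is exactly the symmetric one described above. A secondary point is verifying that the isotopy bringing $B$ (resp. $G$) to meridian position may be taken to fix a common disk $\Delta_R$ meeting both $B$ and $G$ once, which should follow from an innermost--disk argument applied to the intersections of $\Delta_R$ with the other two components.
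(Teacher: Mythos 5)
Your final comparison step is where all the content of the proposition lives, and it is the step that does not go through as written. Your recipe computes $K_G$ by sliding $G$ over $R$ and cancelling $(B,R)$, and $K_B$ by sliding $B$ over $R$ and cancelling $(G,R)$; the outputs are then a band sum of $G$ with pushoffs of $R$ and a band sum of $B$ with pushoffs of $R$, respectively, and nothing in this description makes the two knots visibly equal. The appeal to ``the ambient isotopy of $S^3$ that rotates the bicollar by $\pi$'' is not a proof: such a rotation does not fix the four endpoints of the arcs of $B$ and $G$ on the boundary of the bicollar (nor the complementary arcs in the outside ball), so it is not clear it carries one reconnection to the other, and you have not shown that the two reconnections agree outside the bicollar in the first place. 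The paper exploits the hypothesis $(R,r)=(U,0)$ differently, in a way that makes the symmetry manifest: since $B$ and $G$ each meet $\Delta_R$ geometrically once, slide $B$ over $G$ along a band $W\subset\Delta_R$ joining the two intersection points, so that the new component $B'$ is disjoint from $\Delta_R$; now $R$ is a $0$-framed unknot whose spanning disk meets only $G$, and only once, so the pair $(R,G)$ cancels and $K_B=B'=B\#_W G$. The symmetric move --- slide $G$ over $B$ along the \emph{same} band $W$ and cancel $(R,B)$ --- gives $K_G=G\#_W B$, which is literally the same band sum and hence the same diagram. The missing idea is to band-sum $B$ and $G$ with each other inside $\Delta_R$, rather than banding each to copies of $R$.

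A secondary point: your concern that the pair $(B,R)$ cannot be cancelled because ``a direct slam-dunk divides by zero,'' and the proposed detour through blow-ups, are unnecessary. The relevant move is the cancellation of a $0$-framed unknot against a component meeting its spanning disk geometrically once (the $3$-dimensional shadow of a $4$-dimensional $1$-/$2$-handle cancellation), which is valid for any framing on the other component; equivalently, in the slam-dunk formula $n-1/r$ the case $r=0$ yields framing $\infty$, i.e.\ deletes the component. This is exactly the move the paper uses elsewhere with a $0$-framed meridian, and it is also the cancellation of $(R,G)$ and $(R,B)$ invoked above.
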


\begin{proof}
Slide $B$ over $G$ such that the resulting blue component $B^{\prime}$ does not intersect with $\Delta_R$. Since $R$ has framing $0$, we can cancel the pair $(R,G)$. Since these induce a homeomorphism from $S^3_{0,0}(R,G)$ to $S^3$, the knot $K_B$ is isotopic to $B^{\prime}$. Similarly, the knot $K_G$ is obtained by sliding $G$ over $B$ using the same band as above and therefore has the same diagram as $K_B$.
\end{proof}

\begin{proposition}
Let $L$ be an $n$-special RBG link. If $B$ bounds a properly embedded disk $\Delta_B$ such that $|\Delta_B\cap R|=1$, $|\Delta_B\cap G|<2$, and if $G$ bounds a properly embedded disk $\Delta_G$ such that $|\Delta_G\cap R|=1$, $|\Delta_G\cap B|\le 2$, then $K_B=K_G$. (All intersections are required to be transverse.)
\end{proposition}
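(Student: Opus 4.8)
The plan is to reduce the proposition to a symmetry of the link and to obtain that symmetry by putting $L$ into a standard form. The point is that the construction of $K_G$ depends only on the framed isotopy class of $L$ and is equivariant: if $h\colon S^3\to S^3$ is an orientation-preserving homeomorphism with $h(R)=R$, $h(B)=G$ and $h(G)=B$, then $h$ carries a Kirby-move sequence realizing $\psi_B$ (which operates on $R\cup G$) to one realizing $\psi_G$ (which operates on $R\cup B$) and takes $G$ to $B$, so that $K_G\cong h(K_B)\cong K_B$ (surgery coefficients being preserved by homeomorphisms, and an orientation-preserving self-homeomorphism of $S^3$ being isotopic to the identity). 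Thus it suffices to produce such an $h$.

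First some bookkeeping. Since $|\Delta_B\cap R|=|\Delta_G\cap R|=1$, both $B$ and $G$ are meridians of $R$, hence unknots. Reading off $l:=lk(B,G)$ from $\Delta_B$ and from $\Delta_G$ gives $|l|\le|\Delta_B\cap G|<2$ and $|l|\le|\Delta_G\cap B|\le 2$, with each geometric intersection number congruent to $l$ mod $2$. Hence either $l=\pm1$, forcing $|\Delta_B\cap G|=|\Delta_G\cap B|=1$, or $l=0$, forcing $|\Delta_B\cap G|=0$ and $|\Delta_G\cap B|\in\{0,2\}$.

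In the case $l=0$, $\Delta_B$ is disjoint from $G$, so $B$ and $G$ lie in disjoint balls; when $|\Delta_G\cap B|=2$ I would first untwist $\Delta_G$ around $B$ by an innermost-arc argument on $\Delta_B\cap\Delta_G$ (this intersection consists of circles together with the single arc joining the two points of $B\cap\Delta_G$, and surgering along that arc replaces $\Delta_G$ by a disk meeting $B$ at most once). One is then left with two meridians of $R$ sitting in disjoint balls; these can be isotoped to two disjoint small meridians of $R$ at nearby points of $R$, a model that has an obvious swap $h$ supported in a small ball, and in which one moreover sees directly that $K_B$ and $K_G$ are both the unknot. In the case $l=\pm1$, the disks $\Delta_B,\Delta_G$ realize the minimal possible intersections, and I would argue—again via innermost-arc and innermost-disk surgeries comparing the two disks—that $R\cup B\cup G$ must be isotopic to $R$ together with two meridians forming a Hopf link inside a ball meeting $R$ in an unknotted arc; that model again admits a swap $h$ fixing $R$, whence $K_B=K_G$.

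The main obstacle is the normalization in the case $l=\pm1$. In Proposition~\ref{prop:R} one has $R=U$, so the whole configuration can be simplified by an explicit diagrammatic move; here $R$ may be knotted, and one must instead deduce the standard Hopf-pair-of-meridians model purely from the existence of two spanning disks with minimal intersection numbers, controlling the mutual position of $\Delta_B$ and $\Delta_G$ by innermost-arc/disk arguments and checking no hidden knotting of $R$ slips in asymmetrically. The $|\Delta_G\cap B|=2$ sub-case of $l=0$ is a milder instance of the same difficulty (untwisting $\Delta_G$ before the normal form is visible).
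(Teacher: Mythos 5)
Your reduction of the statement to the existence of an orientation-preserving homeomorphism $h$ of $S^3$ fixing $R$ and swapping $B$ with $G$ is logically sound (such an $h$ would indeed carry one choice of $\psi_G$ to a valid choice of $\psi_B$, and framings are preserved since $b=g=0$), and your treatment of the $lk(B,G)=0$ cases is essentially fine --- there both $K_B$ and $K_G$ come out as unknots. The problem is the main case $lk(B,G)=\pm 1$, where the normalization you need is not just unproved (as you acknowledge) but, I believe, false. The hypotheses only control the three two-component sublinks: $|\Delta_B\cap R|=1$ forces $R\cup B\cong R\cup\mu_R$, and the punctured disks exhibit $B$ as a band sum of a meridian of $R$ with a meridian of $G$ (and symmetrically for $G$). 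They say nothing about the band. Concretely, take $R=U$, let $G$ be a small round meridian of $R$, and let $B=\mu_R\#_\beta\mu_G$ where $\mu_R$ is a meridian of $R$ at another point, $\mu_G$ is a small meridian of $G$ meeting $\Delta_G$ once, and the band $\beta$ is tied in a trefoil while avoiding $R$, $G$, $\Delta_G$ and the two small meridian disks. All four intersection hypotheses hold with the obvious disks, yet $B$ is a knotted $(1,1)$-curve in the Hopf-link complement $S^3\setminus\nu(R\cup G)\cong T^2\times I$, so $R\cup B\cup G$ is not isotopic to the ``Hopf pair of meridians in a ball'' model, and there is no reason for a symmetry exchanging the round curve $G$ with the band-summed curve $B$ to exist. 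So innermost-arc/disk arguments cannot deliver the normal form; the most they can give you is minimality of the disk intersections, which you already have.

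The conclusion $K_B=K_G$ is nevertheless true in such examples, which shows that a link symmetry is the wrong intermediate claim: the equality comes not from a symmetry of $L$ but from comparing the two Kirby-calculus reductions directly. This is how Proposition 4.11 of \cite{aa} (which this proposition generalizes, and whose proof is what the paper invokes) proceeds: in the $lk(B,G)=\pm1$ case one uses the punctured disks to exhibit both $K_B$ and $K_G$ as a band sum of the same two curves along the same band --- the same mechanism as in the proof of Proposition \ref{prop:R}, where the key point is that sliding $B$ over $G$ and sliding $G$ over $B$ along a common $0$-framed band produce the same knot. If you want a self-contained argument, that is the route to take; as written, your proof fails at its central step in the only case that actually occurs in the paper's examples.
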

\begin{proof}
This is a generalization of Proposition 4.11 in \cite{MP21}, which was for $n=0$. The proof in \cite{MP21} is independent of the framings of the RBG link.
\end{proof}
\begin{remark}
From Example \ref{eg:m}, we see that there exists a special RBG link with disks $\Delta_G$, $\Delta_B$, such that $|\Delta_G\cap R|=1$, $|\Delta_B\cap R|=1$, and $|\Delta_G\cap B|=1$, $|\Delta_B\cap G|=3$, but the associated knots $K_B$, $K_G$ are not isotopic.
\end{remark}

\begin{example}
Consider a family of special RBG links with four twisting boxes as in Figure \ref{little_monster}. Since the linking number $l$ between $B$ and $G$ is $-1$, if $r=1$, then $n=3$. Therefore, we obtain a family of $3$-special RBG links parametrized by the numbers of twists $(a, b, c, d)$. For each choice of $(a, b, c, d)$, we denote the green knot associated to the link by $K_G(a,b,c,d)$, and the corresponding blue knot by $K_B(a,b,c,d)$.
\begin{figure}[h]
{
   \fontsize{9pt}{11pt}\selectfont
   \def\svgwidth{3.2in}
   \begin{center}
\begingroup%
  \makeatletter%
  \providecommand\color[2][]{%
    \errmessage{(Inkscape) Color is used for the text in Inkscape, but the package 'color.sty' is not loaded}%
    \renewcommand\color[2][]{}%
  }%
  \providecommand\transparent[1]{%
    \errmessage{(Inkscape) Transparency is used (non-zero) for the text in Inkscape, but the package 'transparent.sty' is not loaded}%
    \renewcommand\transparent[1]{}%
  }%
  \providecommand\rotatebox[2]{#2}%
  \newcommand*\fsize{\dimexpr\f@size pt\relax}%
  \newcommand*\lineheight[1]{\fontsize{\fsize}{#1\fsize}\selectfont}%
  \ifx\svgwidth\undefined%
    \setlength{\unitlength}{387.50062212bp}%
    \ifx\svgscale\undefined%
      \relax%
    \else%
      \setlength{\unitlength}{\unitlength * \real{\svgscale}}%
    \fi%
  \else%
    \setlength{\unitlength}{\svgwidth}%
  \fi%
  \global\let\svgwidth\undefined%
  \global\let\svgscale\undefined%
  \makeatother%
  \begin{picture}(1,0.76404563)%
    \lineheight{1}%
    \setlength\tabcolsep{0pt}%
    \put(0,0){\includegraphics[width=\unitlength,page=1]{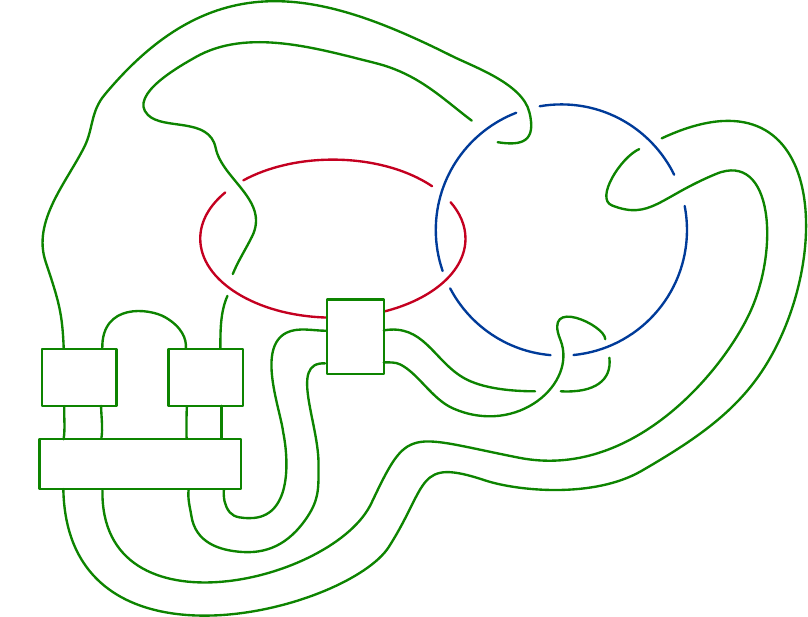}}%
    \put(-0.00029477,0.50139599){\color[rgb]{0.05098039,0.51764706,0}\makebox(0,0)[lt]{\lineheight{1.25}\smash{\begin{tabular}[t]{l}$0$\end{tabular}}}}%
    \put(0.70507609,0.66224417){\color[rgb]{0,0.23137255,0.6}\makebox(0,0)[lt]{\lineheight{1.25}\smash{\begin{tabular}[t]{l}$0$\end{tabular}}}}%
    \put(0.0920285,0.28443218){\color[rgb]{0.05098039,0.51764706,0}\makebox(0,0)[lt]{\lineheight{1.25}\smash{\begin{tabular}[t]{l}$a$\end{tabular}}}}%
    \put(0.24818471,0.28224301){\color[rgb]{0.05098039,0.51764706,0}\makebox(0,0)[lt]{\lineheight{1.25}\smash{\begin{tabular}[t]{l}$b$\end{tabular}}}}%
    \put(0.16264943,0.18030719){\color[rgb]{0.05098039,0.51764706,0}\makebox(0,0)[lt]{\lineheight{1.25}\smash{\begin{tabular}[t]{l}$c$\end{tabular}}}}%
    \put(0.43038763,0.33668578){\color[rgb]{0.05098039,0.51764706,0}\makebox(0,0)[lt]{\lineheight{1.25}\smash{\begin{tabular}[t]{l}$d$\end{tabular}}}}%
    \put(0.41284174,0.58852801){\color[rgb]{0.76862745,0,0.1254902}\makebox(0,0)[lt]{\lineheight{1.25}\smash{\begin{tabular}[t]{l}$1$\end{tabular}}}}%
  \end{picture}%
\endgroup%

   \end{center}
   \caption{}
   \label{little_monster}
}
\end{figure}

For example, let $(a,b,c,d)=(-2,1,-1,-1)$. The knot $K_G(-2,2,-1,-1)$ is the non-hyperbolic knot T$(-2, 3)\#$T$(2, 5)$, and the knot $K_B(-2,2,-1,-1)$ is recognized as $K12n121$ by SnapPy \cite{SnapPy} (see Figure \ref{torusfish}). Since $K_G(-2,2,-1,-1)$ and $K_B(-2,2,-1,-1)$ are generated by a $3$-special RBG link, they have the same $3$-surgery. Thus, $3$ is a non-characterizing slope for T$(-2, 3)\#$T$(2, 5)$, recovering an example in \cite{torusknot}.

\begin{figure}[h]
{
   \fontsize{9pt}{11pt}\selectfont
   \def\svgwidth{3.2in}
   \begin{center}
\begingroup%
  \makeatletter%
  \providecommand\color[2][]{%
    \errmessage{(Inkscape) Color is used for the text in Inkscape, but the package 'color.sty' is not loaded}%
    \renewcommand\color[2][]{}%
  }%
  \providecommand\transparent[1]{%
    \errmessage{(Inkscape) Transparency is used (non-zero) for the text in Inkscape, but the package 'transparent.sty' is not loaded}%
    \renewcommand\transparent[1]{}%
  }%
  \providecommand\rotatebox[2]{#2}%
  \newcommand*\fsize{\dimexpr\f@size pt\relax}%
  \newcommand*\lineheight[1]{\fontsize{\fsize}{#1\fsize}\selectfont}%
  \ifx\svgwidth\undefined%
    \setlength{\unitlength}{509.93354227bp}%
    \ifx\svgscale\undefined%
      \relax%
    \else%
      \setlength{\unitlength}{\unitlength * \real{\svgscale}}%
    \fi%
  \else%
    \setlength{\unitlength}{\svgwidth}%
  \fi%
  \global\let\svgwidth\undefined%
  \global\let\svgscale\undefined%
  \makeatother%
  \begin{picture}(1,0.51216384)%
    \lineheight{1}%
    \setlength\tabcolsep{0pt}%
    \put(0,0){\includegraphics[width=\unitlength,page=1]{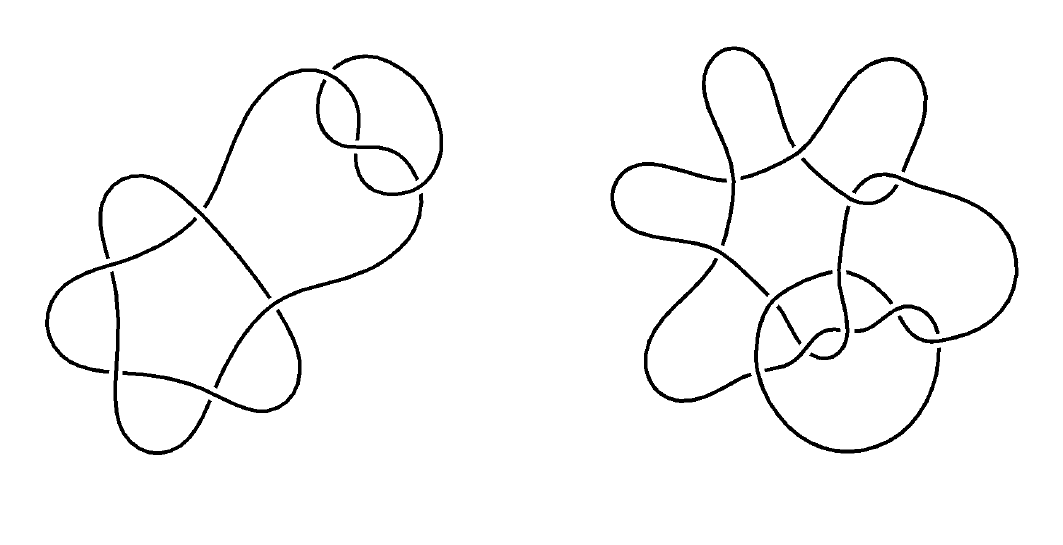}}%
    \put(0.09299208,0.00466233){\color[rgb]{0,0,0}\makebox(0,0)[lt]{\lineheight{1.25}\smash{\begin{tabular}[t]{l}$K_G(-2,2,-1,-1)$\end{tabular}}}}%
    \put(0.64864462,0.00466233){\color[rgb]{0,0,0}\makebox(0,0)[lt]{\lineheight{1.25}\smash{\begin{tabular}[t]{l}$K_B(-2,2,-1,-1)$\end{tabular}}}}%
  \end{picture}%
\endgroup%

   \end{center}
   \caption{}
   \label{torusfish}
}
\end{figure}

This family also produces knot pairs with small crossing numbers (summing up to $20$). When $(a,b,c,d)=(1,2,-1,-1)$, we have that $S^3_3(6_3)\cong S^3_3(K14n15962)$. For $(a,b,c,d)=(0,1,-1,-1)$, we obtain that $S^3_3(6_2)\cong S^3_3(K14n10164)$, which can also be obtained from Example \ref{eg:m} with $r=1, m=0$. Finally, when $(a,b,c,d)=(-1,2,-1,-1)$, the link generates the knot pair $(K_G, K_B) = (10_{125}, 10_{132})$, whose $3$-surgeries are isometric hyperbolic $3$-manifolds. All of the knot pairs in this example have the property that $s(K_B) = s(K_G)$.
\end{example}

\section{potential exotica}
\label{nsinv}

In this section, we follow the recipe given in Section \ref{nslicehomo} and look for exotica using $n$-special links and the Rasmussen's $s$-invariant. More specifically, we can use $n$-special RBG links to find knot pairs $(K,J)$ that share the same $n$-surgery. If $K$ is $n$-slice in $\#^m \overline{\mathbb{CP}^2}$ and $J$ is not $n$-slice in $\#^m \overline{\mathbb{CP}^2}$ for some $m$, then we can build an exotic negative-definite $4$-manifold (cf. Theorem \ref{prop:definite}). We first discuss the use of the $s$-invariant in obstructing a knot from being $n$-slice and then present an example which generates potential exotica.

\subsection{$n$-special RBG link and the $s$-invariant}

We generalize the results of Nakamura \cite{Kai} to $n$-special RBG links.

\begin{proposition}[Analogue of Lemma 3.1 in \cite{Kai}]\label{0}
Let $L=\{(R,r),(B,0),(G,0)\}$ be an $n$-special RBG link. If $R$ is $r$-slice in a smooth oriented closed $4$-manifold $W$, then $$X_n(K_B)\# W\cong X_n(K_G)\# W.$$
\end{proposition}
\begin{proof}
Let $Z$ be the 4-manifold obtained by attaching two $2$-handles to $W^{\circ}\backslash \nu(D)$ along $\{(B,0),(G,0)\}$, where $D$ is an $r$-slice disk of $R$. Since $(B,0)$ is isotopic to $(\mu_R,0)$, the $2$-handle attached along $(B,0)$ fills $\nu(D)$. Slide $G$ over $R$ such that $G$ does not intersect some meridian disk $\Delta_B$ bounded by $B$. This changes $(G,0)$ to $(K_G,n)$ and induces $Z\cong W^{\circ} \cup_{(K_G,n)} 2h \cong X_n(K_G) \# W$. Similarly, $Z \cong X_n(K_B) \# W$, and therefore $X_n(K_B)\# W\cong X_n(K_G)\# W$.
\end{proof}

\begin{corollary}[Analogue of Corollary 3.2 in \cite{Kai}]\label{1}
Let $L=\{(R,r),(B,0),(G,0)\}$ be an $n$-special RBG link, such that $R$ is $r$-slice in $W$. If $K_B$ is $n$-slice in $X$, then $K_G$ is $n$-slice in $X\# -W$.
\end{corollary}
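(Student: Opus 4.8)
The plan is to combine Proposition \ref{0} with the cut-and-paste description of $n$-slice knots from Section \ref{nslicehomo}. First I would record what the hypothesis that $K_B$ is $n$-slice in $X$ gives us: if $D_B\subset X^{\circ}$ is an $n$-slice disk for $K_B$, then, with the orientation conventions of Section \ref{nslicehomo} (so that $\nu(D_B)$ together with the removed $B^4$ reconstitutes $-X_n(K_B)$), we have the decomposition
$$X \;\cong\; -X_n(K_B)\cup_{S^3_n(K_B)} E(D_B),$$
where $E(D_B)=X^{\circ}\setminus\nu(D_B)$ is the disk exterior. This is just the tautology $X_{(D_B,\mathrm{id})}=X$ from the construction of Section \ref{nslicehomo}.

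Next I would reverse orientations in Proposition \ref{0} to obtain a diffeomorphism $h:-X_n(K_B)\,\#-W\to -X_n(K_G)\,\#-W$, whose restriction to the boundary is some homeomorphism $g:S^3_n(K_B)\to S^3_n(K_G)$ — in particular an $n$-surgery homeomorphism in the sense of the paper (one expects $g$ to be $\phi_L$, but this will not be needed). Then I would run the chain
\begin{align*}
X\,\#-W &\;\cong\; \bigl(-X_n(K_B)\,\#-W\bigr)\cup_{S^3_n(K_B)} E(D_B)\\
&\;\cong\; \bigl(-X_n(K_G)\,\#-W\bigr)\cup_{g^{-1}} E(D_B)\\
&\;\cong\; -X_n(K_G)\cup_{g^{-1}}\bigl(E(D_B)\,\#-W\bigr),
\end{align*}
where the first step pushes the connected summand $-W$ into the trace piece of the decomposition of $X$ above, the second applies $h$ (carrying the $E(D_B)$-gluing to a gluing along $g^{-1}$), and the third pushes $-W$ back across the gluing hypersurface into the other piece — all legitimate because a connected sum is supported in a ball that can be isotoped into either piece adjacent to the gluing region. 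The last manifold has the shape $-X_n(K_G)\cup_{\varphi}E'$ with $\varphi$ an $n$-surgery homeomorphism and $E'$ a compact $4$-manifold with the correct boundary; since the relevant proposition in Section \ref{nslicehomo} only uses that $-X_n(K_G)=X_{-n}(-K_G)=B^4\cup(\text{$2$-handle})$ sits inside the manifold (removing that $B^4$ exposes the core $2$-handle as an $n$-slice disk of $K_G$), it applies verbatim with $E'=E(D_B)\,\#-W$ in place of an honest disk exterior. Hence $K_G$ is $n$-slice in $-X_n(K_G)\cup_{g^{-1}}(E(D_B)\,\#-W)\cong X\,\#-W$, which is the claim.

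The step that requires genuine care — rather than a deep obstacle — is the orientation and gluing bookkeeping: one must check that all the copies of $-X_n(\cdot)$, the hypersurfaces $\pm S^3_n(\cdot)$, and the gluing maps are oriented compatibly, so that $h$ really transports the $E(D_B)$-gluing on the $K_B$ side to a valid one on the $K_G$ side, and so that the final manifold is orientation-compatibly of the form used in Section \ref{nslicehomo}. Once conventions are fixed as in the discussion there, this is routine, and the whole argument is the evident analogue of the proof of Corollary 3.2 in \cite{Kai}.
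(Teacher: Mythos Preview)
Your argument is correct and is essentially the paper's proof, just unpacked: the paper phrases everything via the trace-embedding lemma (``$K_B$ $n$-slice in $X$ $\Rightarrow$ $-X_n(K_B)$ smoothly embeds in $X$ $\Rightarrow$ $-X_n(K_B)\#-W\cong -X_n(K_G)\#-W$ smoothly embeds in $X\#-W$ $\Rightarrow$ $K_G$ is $n$-slice in $X\#-W$''), whereas you track the disk exterior $E(D_B)$ explicitly through the cut-and-paste. The content is the same; your version simply makes the gluing bookkeeping visible rather than hiding it behind the word ``embeds.''
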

\begin{proof}
If $K_B$ is $n$-slice in $X$, then $-X_n(K_B)$ smoothly embeds in $X$, which implies that $-X_n(K_B)\# -W$ smoothly embeds in $X\#-W$. By Proposition \ref{0}, $ -X_n(K_G)\#-W$ also smoothly embeds in $X\#-W$. Therefore, $K_G$ is $n$-slice in $X\#-W$.
\end{proof}

\begin{proposition} [Analogue of Lemma 3.11 in \cite{Kai}]\label{2}
Let $L=\{(R,r),(B,0),(G,0)\}$ be an $n$-special RBG link. If $R$ is $(r-1)$-slice in some closed 4-manifold $W$, then $K_B,K_G$ are $(n+1)$-slice in $W\#\overline{\mathbb{CP}^2}$. 
\end{proposition}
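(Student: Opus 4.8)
The plan is to mimic the proof of Proposition~\ref{0}, but this time to attach only \emph{one} $2$-handle to the exterior of an $(r-1)$-slice disk of $R$ and to absorb the framing discrepancy into a blow-up. Concretely, let $D$ be an $(r-1)$-slice disk of $R$ in $W$, so that $-X_{r-1}(R)$ embeds in $W$ and $E(D)=W^{\circ}\setminus\nu(D)$ has $\partial E(D)=S^3_{r-1}(R)$. I would then attach the $2$-handle along $(B,0)$; since $B$ is isotopic to the meridian $\mu_R$, this $2$-handle caps off $\nu(D)$, and one checks (as in Proposition~\ref{0}) that the resulting manifold is $X_{f_b}(K_B)\#W'$ for the appropriate knot $K_B$, where $W'$ differs from $W$ by the fact that we used the $(r-1)$-disk rather than an $r$-disk. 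The $-1$ in the framing of $R$ is exactly what a blow-down of a $\overline{\mathbb{CP}^2}$ summand contributes, so the natural bookkeeping gives $X_{f_b}(K_B)$ as a submanifold of $W\#\overline{\mathbb{CP}^2}$; the point is to track that $f_b=n+1$ rather than $n$.

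The key computation is the linking-matrix / determinant bookkeeping that pins down the surgery coefficient. For an $n$-special RBG link we have $n=-\det M_L=l(rl-2)$ with $lk(R,B)=lk(R,G)=1$. If instead we treat $R$ as $(r-1)$-framed — equivalently, blow up to replace the framing $r$ by $r-1$ and a new $\mp1$-framed unknot linking $R$ — and then run the same sequence of slides and a slam-dunk that produces $K_B$ (respectively $K_G$) from the original link, the determinant changes from $-l(rl-2)$ to $-l((r-1)l-2) = -l(rl-2)+l^2 = n+l^2 = n+1$, using $l=\pm1$ exactly as in the discussion after Definition~\ref{def:special}. So the knot obtained is $(n+1)$-framed, i.e. $K_B$ (and symmetrically $K_G$) is $(n+1)$-slice, and the ambient manifold has picked up one $\overline{\mathbb{CP}^2}$ from the blow-up. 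I would spell this out by attaching the $2$-handle along $(B,0)$ to $E(D)\subset (W\#\overline{\mathbb{CP}^2})^{\circ}$ after the blow-up, sliding $B$ (resp.\ $G$) over $R$ so it becomes $K_B$ (resp.\ $K_G$), and reading off that the core of that $2$-handle is a disk with self-intersection $-(n+1)$.

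The main obstacle I anticipate is purely organizational rather than conceptual: making sure the blow-up is performed "against" the slice disk in the right way so that the $(r-1)$-slice disk of $R$ together with the exceptional sphere genuinely assembles into the data needed to cap off $\nu(D)$ after sliding, and that the self-intersection number comes out $-(n+1)$ with the correct sign. One has to be careful that the blow-up region is disjoint from $\nu(D)$ and that the slide of $G$ over $R$ (resp.\ $B$ over $R$) is compatible with the handle that fills in the normal disk bundle of $D$. Once that is arranged, the identification $W^{\circ}\cup_{(K_G,n+1)}2h \cong X_{n+1}(K_G)\#(W\#\overline{\mathbb{CP}^2})$ — and symmetrically for $K_B$ — is the same Kirby-calculus argument as in Proposition~\ref{0}, and the conclusion that $K_B,K_G$ are $(n+1)$-slice in $W\#\overline{\mathbb{CP}^2}$ follows by taking the core of the attached $2$-handle as the slice disk (reversing orientation as in the proof of Proposition~\ref{2}'s $n=0$ predecessor, Lemma~3.11 in \cite{Kai}).
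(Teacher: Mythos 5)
Your strategy has a genuine gap, and it is located exactly at the step you describe as ``purely organizational'': the framing bookkeeping. First, the arithmetic goes the wrong way. From $n=-\det M_L=l(rl-2)=rl^2-2l$ with $l=\pm1$, lowering the framing of $R$ from $r$ to $r-1$ produces an $(n-l^2)=(n-1)$-special RBG link, not an $(n+1)$-special one (this is precisely how Proposition~\ref{prop:r-1} in the paper uses $L[r-1]$). Your identity ``$-l(rl-2)=n$'' contradicts $n=l(rl-2)$, which is where the sign flips. Second, even with the sign corrected, running ``the same sequence of slides'' over $(R,r-1)$ instead of $(R,r)$ does not produce $K_B$: the band sums are taken with $(r-1)$-framed push-offs of $R$, so you obtain the knot $K_B[r-1]$ of the modified link, which differs from $K_B$ by a full twist. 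So the construction you outline would, at best, show that the \emph{wrong} knot bounds a disk of the \emph{wrong} self-intersection. Relatedly, the blow-up you invoke is oriented backwards: a framing drop $r\mapsto r-1$ is effected by blowing down a $+1$-framed unknot, i.e.\ a $\mathbb{CP}^2$ summand, not a $\overline{\mathbb{CP}^2}$ summand; the $\overline{\mathbb{CP}^2}$ in the statement only appears after reversing orientation at the end.

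The idea you are missing is to change the framing of the \emph{meridian}, not of $R$. Keep $(R,r)$ and give $B\cong\mu_R$ framing $+1$. Then (i) sliding $B$ over $R$ carries $(B,1)$ to $(K_B,n+1)$ --- the original knot, with framing shifted up by exactly one, since the framing contribution of the slides is independent of the initial framing of $B$ --- so $X_{r,1}(R,B)\cong X_{r,n+1}(R,K_B)\supset X_{n+1}(K_B)$; and (ii) sliding $R$ over the $+1$-framed meridian $B$ unlinks them and drops the framing of $R$ to $r+1-2=r-1$, giving $X_{r,1}(R,B)\cong X_{r-1}(R)\#\mathbb{CP}^2$. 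Since $R$ is $(r-1)$-slice in $W$, $-X_{r-1}(R)$ embeds in $W$, hence $-X_{n+1}(K_B)\subset -X_{r,1}(R,B)\cong -X_{r-1}(R)\#\overline{\mathbb{CP}^2}$ embeds in $W\#\overline{\mathbb{CP}^2}$, which is the desired $(n+1)$-sliceness; the same argument with $G$ in place of $B$ handles $K_G$. Your framework of attaching a $0$-framed meridian handle to the exterior of the $(r-1)$-slice disk, as in Proposition~\ref{0}, cannot be patched to reach this conclusion, because a $0$-framed meridian cannot be unlinked from $R$ by a handle slide (the slide leaves the linking number unchanged), which is exactly why the $+1$ framing is essential here.
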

\begin{proof}
We use $X_{f_K,f_J}(K,J)$ to denote the 4-manifold obtained by attaching two $2$-handles along the framed link $\{(K,f_K),(J,f_J)\}$ to a 4-ball. 

Consider the slides of $B$ over $R$ which are applied to $L$ in obtaining the knot $K_B$. These induce a diffeomorphism between $X_{r,n+1}(R,K_B)$ and $X_{r,1}(R,B)$. Next, Slide $R$ over $B$ to separate $B$ and $R$, and obtain a diffeomorphism $X_{r,1}(R,B)\cong X_{r-1}(R)\# {\mathbb{CP}}^2$. Since $R$ is $(r-1)$-slice in $W$, the trace $-X_{r-1}(R)$ smoothly embeds in $W$, which implies that $-X_{r-1}(R)\# \overline{\mathbb{CP}^2}$ smoothly embeds in $W\# \overline{\mathbb{CP}^2}$. Since $-X_{n+1}(K_B)\subset -X_{r,n+1}(R,K_B)$, the trace $-X_{n+1}(K_B)$  smoothly embeds in $W\# \overline{\mathbb{CP}^2}$, i.e. $K_B$ is $(n+1)$-slice in $W\#\overline{\mathbb{CP}^2}$.
\end{proof}

From now on, we assume that $n$ is a nonnegative integer.
\begin{theorem}
\label{thm:slice}
Let $L=\{(R,r),(B,0),(G,0)\}$ be an $n$-special RBG link.
\begin{enumerate}[(a)]
\item  If $R$ is $r$-slice in some $\#^m {\mathbb{CP}^2}$ and $K_B$ is $n$-slice in some $\#^l \overline{\mathbb{CP}^2}$, then $K_G$ is $n$-slice in $\#^{l+m} \overline{\mathbb{CP}^2}$.

\item If $R$ is $(r-1)$-slice in some $\#^m \overline{\mathbb{CP}^2}$, then $K_B,K_G$ are $(n+1)$-slice in $\#^{m+1}\overline{\mathbb{CP}^2}$.
\end{enumerate}
\end{theorem}

\begin{proof}
(a) If $R$ is $r$-slice in some $\#^m \mathbb{CP}^2$, and $K_B$ is $n$-slice in some $\#^l \overline{\mathbb{CP}^2}$, then by Corollary \ref{1}, $K_G$ is $n$-slice in $\#^{l+m} \overline{\mathbb{CP}^2}$. (b) If $R$ is $(r-1)$-slice in some $\#^m \overline{\mathbb{CP}^2}$, then by Proposition \ref{2}, $K_B,K_G$ are $(n+1)$-slice in $\#^{m+1} \overline{\mathbb{CP}^2}$.
\end{proof}
For obstructing $n$-sliceness in $\# ^l\overline{\mathbb{CP}^2}$, we will use the following adjunction inequality for the $s$-invariant in \cite{Fpq}.
\begin{theorem}[Corollary 1.4 in \cite{Fpq}]
\label{conj:s}
Let $W = \# ^l\overline{\mathbb{CP}^2}$. Let $K \subset \partial W^{\circ} = S^3$ be a knot, and $\Sigma\subset W^{\circ}$ a properly, smoothly embedded oriented surface with no closed components, such that $\partial\Sigma = K$. Then:
$$s(K)\le 2g(\Sigma)-|[\Sigma]|-[\Sigma]\cdot[\Sigma]$$
\end{theorem}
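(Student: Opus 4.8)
The plan is to reduce the inequality to Rasmussen's classical slice-genus bound $s(K)\le 2g$ for surfaces in $B^4$, by blowing down the exceptional spheres of $W=\#^l\overline{\mathbb{CP}^2}$ and carefully bookkeeping the resulting change in genus. Present $W^{\circ}$ by its standard handle decomposition: a $0$-handle $B^4$ together with $l$ two-handles attached along a $(-1)$-framed $l$-component unlink $U_1\sqcup\cdots\sqcup U_l\subset S^3=\partial B^4$, so that the cocores cap off to exceptional spheres $E_1,\dots,E_l$ with $E_i\cdot E_j=-\delta_{ij}$. Write $[\Sigma]=\sum_i a_i e_i$, so that $[\Sigma]^2=-\sum_i a_i^2$ and $|[\Sigma]|=\sum_i|a_i|$ (the $L^1$-norm in the diagonal basis). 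After making $\Sigma$ transverse to each $E_i$, the program is: (i) position $\Sigma$ so that it meets $E_i$ in exactly $|a_i|$ points, all of the same sign; (ii) blow down all the $E_i$; (iii) observe that the blown-down surface lies in $B^4$ and apply $s(K)\le 2g$.

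The arithmetic of step (ii) is what produces the exact right-hand side. Near $E_i$ the surface consists of $|a_i|$ meridian disks, whose boundaries on $\partial\nu(E_i)=S^3$ are $|a_i|$ parallel Hopf fibers, forming a torus link $T(|a_i|,|a_i|)$ (up to mirroring). Blowing down replaces these disks by a genus-minimizing cap for $T(|a_i|,|a_i|)$ in $B^4$; since this link is a (positive or negative) braid closure, its Seifert surface realizes the slice Euler characteristic $\chi=2|a_i|-a_i^2$. Gluing in these caps turns $\Sigma$ into a connected surface $\widehat\Sigma\subset B^4$ with $\partial\widehat\Sigma=K$ and $\chi(\widehat\Sigma)=\chi(\Sigma)+\sum_i(|a_i|-a_i^2)$, hence $g(\widehat\Sigma)=g(\Sigma)+\tfrac12\sum_i(a_i^2-|a_i|)$. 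Rasmussen's bound then gives $s(K)\le 2g(\widehat\Sigma)=2g(\Sigma)-\sum_i|a_i|+\sum_i a_i^2=2g(\Sigma)-|[\Sigma]|-[\Sigma]^2$, exactly as claimed.

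The main obstacle is step (i): realizing the geometric intersection number of $\Sigma$ with each $E_i$ to equal $|a_i|=|[\Sigma]\cdot e_i|$, with all intersections of one sign. This is precisely the kind of statement that fails in smooth $4$-dimensional topology: although $\pi_1(W^{\circ})=1$, one cannot in general cancel an oppositely-signed pair of points of $\Sigma\cap E_i$ by an embedded Whitney move, and the tubing (``Norman trick'') that does remove such a pair either increases $g(\Sigma)$ or alters $[\Sigma]$ --- either of which destroys the sharpness of the estimate. Thus the blow-down argument proves the inequality only for surfaces already in ``minimal position,'' and the theorem for arbitrary $\Sigma$ cannot be obtained this way.

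The remedy, and the route actually taken in \cite{Fpq}, is to replace the geometric blow-down by an algebraic one, working with the Bar--Natan/Lee deformation of Khovanov homology and its extension to cobordisms in $4$-manifolds (the $\mathfrak{gl}_2$ skein lasagna module of $(W^{\circ},K)$). One encodes $W^{\circ}$ as $B^4$ with the $(-1)$-framed $2$-handles, computes the map induced by $\Sigma$ on Lee homology, and reads off the contribution of each handle as a fixed quantum/filtration shift depending only on $a_i$ --- the same numerology $a_i^2-|a_i|$ as above --- but now depending only on the \emph{homological} intersection data, so that it applies to every $\Sigma$. The inequality then follows from tracking the filtration level of the image of the Lee generator together with the nonvanishing of the induced map; establishing this functoriality and nonvanishing (equivalently, controlling the relevant skein lasagna class of $\#^l\overline{\mathbb{CP}^2}$) is the genuine technical heart of the proof.
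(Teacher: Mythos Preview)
The paper does not prove this statement at all: Theorem~\ref{conj:s} is quoted verbatim as Corollary~1.4 of \cite{Fpq} and used as a black box. So there is no ``paper's own proof'' to compare your proposal against.

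That said, your write-up is not really a proof proposal either. You present a geometric blow-down argument, correctly diagnose that it fails for arbitrary $\Sigma$ (because one cannot in general realize $|\Sigma\cap E_i|=|a_i|$ smoothly without paying in genus or homology class), and then summarize the actual strategy of \cite{Fpq} via Lee/Bar--Natan deformations and skein-lasagna-type functoriality. This is accurate as an expository account of why the result is nontrivial and of the shape of Ren's argument, but it is not itself a proof: the substantive content --- the construction of the relevant cobordism maps, the filtration bookkeeping, and the nonvanishing statement --- is asserted rather than established. If your goal is to supply a proof here, you would need either to carry out those details or to simply cite \cite{Fpq}, which is exactly what the paper does.
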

\begin{proposition}\label{Kai:s}
If $K$ is $n$-slice in some $\# ^l\overline{\mathbb{CP}^2}$, then $s(K)\le n-\sqrt{n}$. If $J$ is $(-n)$-slice in some $\# ^l{\mathbb{CP}^2}$, then $s(J)\ge -n+\sqrt{n}$.
\end{proposition}
\begin{proof}
Let $\{e_1,\cdots,e_l\}$ be an orthonormal basis of $H_2(\#^l\overline{\mathbb{CP}^2})$, and $[D]=a_1e_1+a_2e_2+\cdots+a_le_l$ in $H_2(\#^l\overline{\mathbb{CP}^2})$. Since $K$ is $n$-slice, $-[D]\cdot[D]=a_1^2+\cdots+a_l^2=n$. Since $|[D]|^2=(|a_1|+\cdots+|a_l|)^2\ge a_1^2+\cdots+a_l^2=n$, we have $s(K)\le n-\sqrt{n}$ by Theorem \ref{conj:s}. For the second part, consider the mirror of $J$, which is $n$-slice in $\# ^l\overline{\mathbb{CP}^2}$, and obtain that $s(J)\ge -n+\sqrt{n}$.
\end{proof}
Note that a similar inequality holds for the $\tau$-invariant (see Corollary 2.13 \cite{Kai}).
{
\renewcommand{\thetheorem}{\ref{thm:sinv}}
\begin{theorem}
Let $L=\{(R,r),(B,0),(G,0)\}$ be an $n$-special RBG link. 
\begin{enumerate}[(a)]
\item  If $R$ is $r$-slice in some $\#^m {\mathbb{CP}^2}$ and $K_B$ is $n$-slice in some $\#^l \overline{\mathbb{CP}^2}$, then $s(K_G)\le n-\sqrt{n}$.

\item If $R$ is $(r-1)$-slice in some $\#^m \overline{\mathbb{CP}^2}$, then $s(K_G)\le n+1-\sqrt{n+1}$.
\end{enumerate}
\end{theorem}
\addtocounter{theorem}{-1}
}
\begin{proof}
If $R$ is $r$-slice in some $\#^m \mathbb{CP}^2$ and $K_B$ is $n$-slice in some $\#^l \overline{\mathbb{CP}^2}$, then by Corollary \ref{1}, $K_G$ is $n$-slice in $\#^{l+m} \overline{\mathbb{CP}^2}$, then by Proposition \ref{Kai:s}, $s(K_G)\le n-\sqrt{n}$.

If $R$ is $(r-1)$-slice in some $\#^m \overline{\mathbb{CP}^2}$, then by Proposition \ref{2}, $K_G$ is $(n+1)$-slice in $\#^{m+1} \overline{\mathbb{CP}^2}$. Then, by Proposition \ref{Kai:s}, $s(K_G)\le n+1-\sqrt{n+1}$.
\end{proof}

Given an $n$-special RBG link $L$, one can modify the link to $L^{\prime}$ by decreasing the framing $r$ of $R$. Notice that the diagrams of $K_G$ can be obtained by adding full twists on parallel strands of the diagram of $K_G^{\prime}$ of the modified link. We will use the following lemma, which generalize Proposition 7.6 (8) in \cite{ff}, to get more precise bounds of $s(K_G)$.

\begin{lemma}\label{lem:s-l}
Let $K$ be a knot. Let $K^{\prime}$ be obtained by adding one positive twist to parallel strands of $K$. If the algebraic intersection between the parallel stands and the twisting disk $\Delta_T$ is $l$, then we have {$s(K^{\prime})\le s(K)-|l|+l^2$}.
\end{lemma}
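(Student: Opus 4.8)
\medskip\noindent\textbf{Proof proposal.} The plan is to realize the positive full twist as a blow-down, to extract from it a genus-zero cobordism inside a negative definite $4$-manifold, and to feed that cobordism into the adjunction inequality for $s$. Let $\Delta_T$ be the twisting disk and $C=\partial\Delta_T$; then $C$ is an unknot in $S^3\setminus\nu(K)$, and $\mathrm{lk}(K,C)$ is the algebraic intersection number of the parallel strands with $\Delta_T$, that is $\mathrm{lk}(K,C)=l$. Adding one positive full twist along $\Delta_T$ is exactly the effect on $K$ of blowing down a $(-1)$-framed copy of $C$; equivalently, $K'$ is the image of $K$ under the blow-down diffeomorphism $\partial X_{-1}(C)\xrightarrow{\ \cong\ }S^3$. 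The essential feature is that only the homological quantity $l=\mathrm{lk}(K,C)$ enters this description, no matter how many strands meet $\Delta_T$ geometrically; this is the mechanism by which the final bound will depend on $l$ alone.

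Next I would turn this picture into a cobordism. Since $C$ is an unknot, $X_{-1}(C)=\overline{\mathbb{CP}^2}\setminus\interior B^4$, with $\partial X_{-1}(C)=S^3_{-1}(C)\cong S^3$ containing $K'$. Removing the interior of the $0$-handle $B^4$ (in whose boundary $K$ sits) leaves a cobordism $W_0=\overline{\mathbb{CP}^2}\setminus(\interior B^4\sqcup\interior B^4)$ from the $S^3$ containing $K$ to the $S^3$ containing $K'$. Because $K$ is disjoint from $C$, the product annulus $K\times I$, continued trivially through the surgery region, is a properly embedded annulus $A\subset W_0$ with $\partial A=(-K)\sqcup K'$ and $g(A)=0$. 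Its homology class is detected by intersection with the exceptional sphere $E=\Delta_T\cup D$, where $D$ is the core of the $2$-handle and $E\cdot E=-1$: since $A\cap E$ is just the intersection of a parallel copy of $K$ with $\Delta_T$, one gets $[A]\cdot[E]=l$, hence $[A]=-l[E]$, so that $\bigl|[A]\bigr|=|l|$ and $[A]\cdot[A]=-l^2$.

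Finally I would invoke the cobordism form of the adjunction inequality from \cite{Fpq}, that is, the relative version of Theorem \ref{conj:s} obtained from the functoriality of $s$ under cobordisms in $\#^{m}\overline{\mathbb{CP}^2}$: for a connected cobordism $\Sigma\subset W_0$ with $\partial\Sigma=(-K_0)\sqcup K_1$ and no closed components, $s(K_1)\le s(K_0)+2g(\Sigma)-\bigl|[\Sigma]\bigr|-[\Sigma]\cdot[\Sigma]$. Applying this with $\Sigma=A$ gives $s(K')\le s(K)+0-|l|-(-l^2)=s(K)-|l|+l^2$, as asserted. If one wants to use only the literal statement of Theorem \ref{conj:s}, one can cap the $K$-end of $A$ with an arbitrary surface for $K$ in some $\#^{m}\overline{\mathbb{CP}^2}\setminus\interior B^4$ and apply Theorem \ref{conj:s} to $K'$; but this only bounds $s(K')$ in terms of the $\overline{\mathbb{CP}^2}$-adjunction quantity of $K$, not $s(K)$ itself, so the cobordism formulation is the natural input.

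The step I expect to be the main obstacle is the sign bookkeeping: one must check that a \emph{positive} full twist corresponds to the $(-1)$-blow-down, so that the ambient cobordism $W_0$ is negative definite and $[A]\cdot[A]=-l^2$ — this is precisely what produces the $+l^2$ (rather than $-l^2$) in the conclusion, and with the opposite convention the move would take place in $\mathbb{CP}^2$, where no such bound holds. I would also want to confirm that the relative inequality from \cite{Fpq} applies to the annulus $A$ with the correct normalization of $\bigl|[\Sigma]\bigr|$, which here is unambiguous because $H_2(\overline{\mathbb{CP}^2})$ is generated by $E$. The remaining verifications — that $A$ is properly embedded after the push-off through the surgery region, and that $A\cap E$ is counted with signs by $\mathrm{lk}(K,C)=l$ — are routine.
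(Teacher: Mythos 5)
Your construction is the same as the paper's: you build $\overline{\mathbb{CP}^2}$ minus two balls by attaching a $(-1)$-framed $2$-handle along $\partial\Delta_T$ to the top of $S^3\times I$, push the annulus $K\times I$ through, identify its class as $l$ times the exceptional generator, and read off $2g-|[\Sigma]|-[\Sigma]\cdot[\Sigma]=-|l|+l^2$. The sign bookkeeping you flag as the main worry is handled correctly. The one place your writeup falls short is the final step: you invoke a ``cobordism form'' of the adjunction inequality, $s(K_1)\le s(K_0)+2g(\Sigma)-|[\Sigma]|-[\Sigma]\cdot[\Sigma]$, which is not the statement quoted from \cite{Fpq} (Theorem~\ref{conj:s} is for a surface with a single boundary knot in $\partial W^\circ=S^3$), and you then explicitly dismiss the literal statement as insufficient because ``capping off'' the $K$-end loses the comparison with $s(K)$. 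The paper's trick, which you missed, is not to cap off but to excise a tubular neighborhood $N$ of a vertical arc $\{p\}\times I$ (with $p\in K$ away from $\Delta_T$): this turns the pair (annulus, cobordism) into a pair $(D,X)\cong(D^2,\overline{\mathbb{CP}^2}\setminus\interior B^4)$ with $\partial D=-K\#K'$ a single knot in a single $S^3$, to which Theorem~\ref{conj:s} applies verbatim; additivity of $s$ under connected sum and $s(-K)=-s(K)$ then give $s(K')-s(K)\le -|l|+l^2$. So your argument is correct in substance, but as written it rests on an uncited relative inequality; either supply that reduction (it is exactly the arc-removal argument just described) or restructure the last step as the paper does.
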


\begin{proof}
Let $T$ be the boundary of $\Delta_T$. We build $W=\overline{\mathbb{CP}^2} \setminus (\interior{B^4} \cup \interior{B^4})$ by attaching a $(-1)$-framed $2$-handle along $T$ to $S^3\times \{1\}\subset S^3\times I$. Consider the annulus $A=K\times I\subset S^3\times I$. Choose a point $p\in K$ which is not on the twisting disk $\Delta_T$. Pick a tubular neighborhood $N$ of $\{p\}\times I$ in $W$, such that $N\cap (\Delta_T\times \{1\})=\varnothing$. Let $(D,X)$ be $(A\setminus N, W\setminus N)\cong (D^2, \overline{\mathbb{CP}^2} \setminus \interior{B^4})$. The boundary of $D$ is $-K\# K^{\prime}$. Since $K$ intersect $\Delta$ algebraically {$l$-times}, $D$ represents the class {$ l[\overline{\mathbb{CP}^1}]$} in $H^2(\overline{\mathbb{CP}^2})$. Applying Theorem \ref{conj:s} to $(D,X)$, we have 
$s(-K\# K^{\prime})\le g(D)-|[D]|-[D]\cdot[D]=${$-|l|+l^2$}. Thus, $s(K^{\prime})\le s(K)-|l|+l^2$.
\end{proof}

\begin{proposition}\label{prop:egR}
Let $L=\{(R,r),(B,0),(G,0)\}$ be an $n$-special RBG link, such that $R=U$, $r\ge 0$. Suppose there exists a disk $\Delta_G$, such that $\partial\Delta_G=G$, and $B,R$ intersect $\Delta_G$ geometrically once. Suppose also that there exists a disk $\Delta_R$, such that $\partial\Delta_R=R$, and $B,G$ intersect $\Delta_R$ geometrically once. If $K_B$ is $n$-slice in some $\#^l\overline{\mathbb{CP}^2}$, then $s(K_G)\le n-\sqrt{n}$.
\end{proposition}

\begin{proof}
Since there exists a disk $\Delta_G$, such that $\partial\Delta_G=G$, and $B,R$ intersect $\Delta_G$ geometrically once, the knot $K_B$ can be obtained by sliding $B$ over $R$ once, and therefore the diagram of $K_B$ does not depend on $r$. Let $L^0=\{(U,0),(B,0),(G,0)\}$. 
Proposition \ref{prop:R} implies that $K_B^0=K_G^0$. 

Observe that $K_G$ can be obtained from $K_G^0$ by adding an $r$-positive twist box on parallel strands. Since $|\Delta_G\cap B|=1$ implies $|lk(B,G)|=1$, the parallel strands link the twist box algebraically once. By Lemma \ref{lem:s-l}, $s(K_G)\le s(K_G^0)\le n-\sqrt{n}$.
\end{proof}

\begin{proposition}\label{prop:r-1}
Let $L[r]=\{(R,r),(B,0),(G,0)\}$ be an $n$-special RBG link, such that $R=U$, $r > 1$. Let $l$ be the linking number between $B$ and $G$. If $l^2 \le n$, then $s(K_G)\le n-\sqrt{n}$.
\end{proposition}
\begin{proof}
Consider the $(n-l^2)$-special RBG link $L[r-1]=\{(R,r-1),(B,0),(G,0)\}$. Since $r>1$, $U$ is $(r-2)$-slice in some $\#^m \overline{\mathbb{CP}^2}$. By Theorem \ref{thm:sinv}, $s(K_G[r-1]) \le n-l^2+1 - \sqrt{n-l^2+1}$. The knot $K_G$ of $L[r]$ can be obtained by adding one positive full twists along parallel strands from $K_G[r-1]$. Since $lk(B,G)=l$, the algebraic intersection number between the parallel strands and the twist box is $l$. By Lemma \ref{lem:s-l}, we have that $s(K_G) \le s(K_G[r-1])-|l|+l^2$. Thus, $s(K_G)\le n-|l|+1-\sqrt{n-l^2+1}$. If $l^2 \le n$, then $n-|l|+1-\sqrt{n-l^2+1}\le n-\sqrt{n}$. The result follows.
\end{proof}

\subsection{Experiments}
Consider an $n$-special RBG link with $R=U$. If $r \le 0$, then $R$ is $r$-slice in $\#^{|r|} {\mathbb{CP}^2}$. By Theorem \ref{thm:sinv}, the $n$-sliceness of $K_B$ implies that $s(K_G)\le n-\sqrt{n}$. Therefore, we cannot use Proposition \ref{Kai:s} to obstruct the $n$-sliceness of $K_G$. If $r>0$, then $R$ is $(r-1)$-slice in $\#^{r-1} \overline{\mathbb{CP}^2}$. By Theorem \ref{thm:sinv}, we only have $s(K_G)\le n+1-\sqrt{n+1}$. Now, pick an integer $n$ such that there exists an even integer $2q$ satisfying $n-\sqrt{n} < 2q \le n+1-\sqrt{n+1}$ (e.g. $n=3,6,8, 11, 13, 15$ etc.). If one can find an $n$-special RBG link with $s(K_G) = 2q$, then since $s(K_G) > n-\sqrt{n}$, we have that $K_G$ is not $n$-slice by Proposition \ref{Kai:s}. Hence, Theorem \ref{thm:sinv} leaves open the possibility to use the $s$-invariant to obstruct $K_G$ from being $n$-slice.

For example, we can consider the case where $n=3$. If there is a $3$-special RBG link with $R=U$ and $r>0$, such that $K_G$ is $3$-slice in some $\#^m\overline{\mathbb{CP}^2}$ and $s(K_B)=2$, then by Proposition \ref{Kai:s}, $K_B$ is not $3$-slice in any $\#^m\overline{\mathbb{CP}^2}$. Thus, such a link would produce an exotic $\#^m\overline{\mathbb{CP}^2}$. Note that $3=n=l(rl-2)$, so $(r,l)\in\{(5,1),(1, -1),(1,3)\}$. If $l=1$, then $r=5>1$, and by Proposition \ref{prop:r-1}, we have that $s(K_G)\le n-\sqrt{n}$, which cannot obstruct the $3$-sliceness of $K_G$. Thus, all potentially useful $3$-special RBG links are those with $r=1$ and $l \in \{-1,3\}$.

\begin{figure}[h]
{
   \fontsize{11pt}{13pt}\selectfont
   \def\svgwidth{2in}
   \begin{center}
\begingroup%
  \makeatletter%
  \providecommand\color[2][]{%
    \errmessage{(Inkscape) Color is used for the text in Inkscape, but the package 'color.sty' is not loaded}%
    \renewcommand\color[2][]{}%
  }%
  \providecommand\transparent[1]{%
    \errmessage{(Inkscape) Transparency is used (non-zero) for the text in Inkscape, but the package 'transparent.sty' is not loaded}%
    \renewcommand\transparent[1]{}%
  }%
  \providecommand\rotatebox[2]{#2}%
  \newcommand*\fsize{\dimexpr\f@size pt\relax}%
  \newcommand*\lineheight[1]{\fontsize{\fsize}{#1\fsize}\selectfont}%
  \ifx\svgwidth\undefined%
    \setlength{\unitlength}{288.03266039bp}%
    \ifx\svgscale\undefined%
      \relax%
    \else%
      \setlength{\unitlength}{\unitlength * \real{\svgscale}}%
    \fi%
  \else%
    \setlength{\unitlength}{\svgwidth}%
  \fi%
  \global\let\svgwidth\undefined%
  \global\let\svgscale\undefined%
  \makeatother%
  \begin{picture}(1,0.88916224)%
    \lineheight{1}%
    \setlength\tabcolsep{0pt}%
    \put(0,0){\includegraphics[width=\unitlength,page=1]{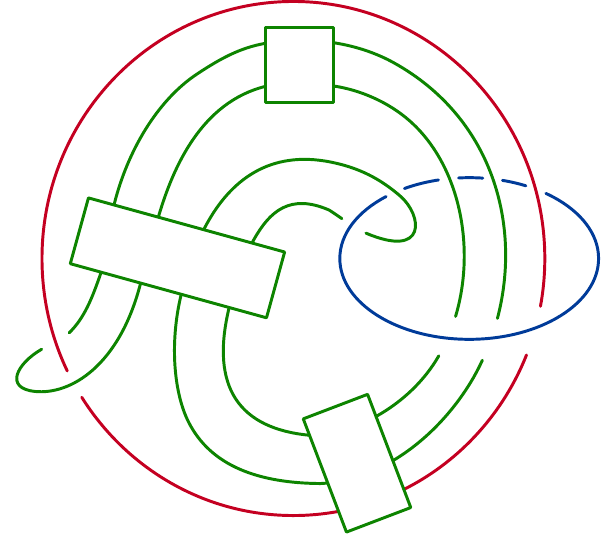}}%
    \put(0.48411544,0.76929057){\color[rgb]{0.05098039,0.51764706,0}\makebox(0,0)[lt]{\lineheight{1.25}\smash{\begin{tabular}[t]{l}$a$\end{tabular}}}}%
    \put(0.28760708,0.43817778){\color[rgb]{0.05098039,0.51764706,0}\makebox(0,0)[lt]{\lineheight{1.25}\smash{\begin{tabular}[t]{l}$b$\end{tabular}}}}%
    \put(0.57547219,0.10720367){\color[rgb]{0.05098039,0.51764706,0}\makebox(0,0)[lt]{\lineheight{1.25}\smash{\begin{tabular}[t]{l}$c$\end{tabular}}}}%
    \put(-0.00039656,0.60037211){\color[rgb]{0.76862745,0,0.12941176}\makebox(0,0)[lt]{\lineheight{1.25}\smash{\begin{tabular}[t]{l}$r$\end{tabular}}}}%
    \put(0.43478078,0.26002579){\color[rgb]{0.05098039,0.51764706,0}\makebox(0,0)[lt]{\lineheight{1.25}\smash{\begin{tabular}[t]{l}$0$\end{tabular}}}}%
    \put(0.93414507,0.28661302){\color[rgb]{0,0.23137255,0.6}\makebox(0,0)[lt]{\lineheight{1.25}\smash{\begin{tabular}[t]{l}$0$\end{tabular}}}}%
  \end{picture}%
\endgroup%

   \end{center}
   \caption{The diagram of $(r+2)$-special RBG links $L(a,b,c; r)$}
   \label{fig:sdiff3}
}
\end{figure}

\begin{figure}
{
   \fontsize{11pt}{13pt}\selectfont
   \def\svgwidth{4in}
   \begin{center}
   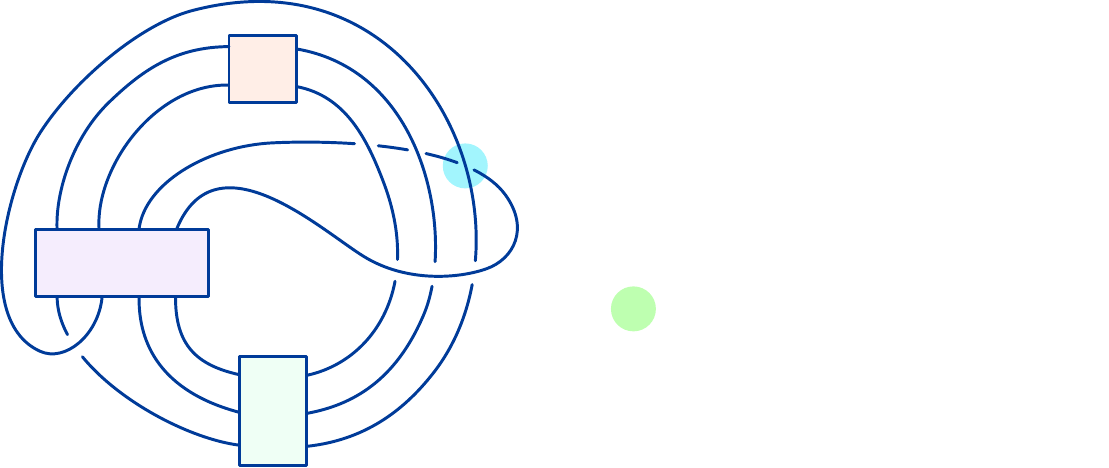
   \end{center}
   \caption{The diagrams of $K_B(a,b,c;r)$ and $K_G(a,b,c;r)$}
\label{KBGabc}
}
\end{figure}

Here is a potential situation that could lead to an exotic $\#^m\overline{\mathbb{CP}^2}$ using the $s$-invariant. Consider the family $L(a,b,c;r)$ of special RBG links in Figure \ref{fig:sdiff3}. Since $l=-1$, the link $L(a,b,c;r)$ is an $(r+2)$-special RBG link. Figure \ref{KBGabc} gives the diagrams of $K_B(a,b,c;r)$ and $K_G(a,b,c;r)$. Note that two diagrams in Figure \ref{KBGabc} are isotopic for all choices of $a,b$ and $c$. Thus, we denote $K_B(a,b,c;r)$ by $K(a,b,c)$, and $K_G(a,b,c;r)$ by $K(a,b,r-c)$.

\begin{example} \label{eg:luckylink}
The link $L(-2,1,-2;1)$ (cf. Figure \ref{fig:sdiff3}) is a $3$-special RBG link. Using KnotJob \cite{KnotJob}, one can compute that $s(K_G) = 0$ and $s(K_B) = 2$. The knot $K_B(-2, 1, -2;1)=K(-2,1,-2)$ is recognized by SnapPy \cite{SnapPy} as $K9\_533$, and $K_G(-2, 1, -2;1)=K(-2, 1, 3)$ has a diagram with $28$ crossings as in Figure \ref{intro} (left) by KnotJob \cite{KnotJob}.
\end{example}

From Example \ref{eg:luckylink}, we obtain the following theorem.
{
\renewcommand{\thetheorem}{\ref{potential3}}
\begin{theorem}
If the knot $K(-2,1,3)$ from the left-hand side of Figure \ref{intro} is $3$-slice in some $\#^m \overline{\mathbb{CP}^2}$, then there exists an exotic $\#^m \overline{\mathbb{CP}^2}$.
\end{theorem}
\addtocounter{theorem}{-1}
}

Therefore, we are interested in whether $K(-2, 1, 3)$ is $3$-slice in some $\#^m\overline{\mathbb{CP}^2}$. Notice that if we change the colored crossings in Figure \ref{KBGabc} by adding $2$-handles, we get ribbon knots for any choice of $(a,b,c;r)$. Therefore, both $K_B$ and $K_G$ are $H$-slice in $\mathbb{CP}^2$ and $4$-slice in $\overline{\mathbb{CP}^2}$. Using SnapPy \cite{SnapPy}, we compute that $\tau(K(-2,1,3))=0$, which does not obstruct $3$-sliceness. However, by considering $L(-2, 1, -3;0)$, we can obstruct $2$-sliceness.
\begin{proposition}\label{2-slice}
The knot $K(-2,1,3)$ in Figure \ref{intro} (left) is not $2$-slice in any $\#^m\overline{\mathbb{CP}^2}$.
\end{proposition}
\begin{proof}
The knot $K(-2, 1, 3)$ has the same diagram as $K_G(-2,1,-3;0)$. Consider the $2$-special RBG link in the $4$-parameter family in Figure \ref{fig:sdiff3} with $(a,b,c;r)=(-2, 1, -3;0)$. Using KnotJob, we compute that $s(K_B(-2,1,-3;0)) =2$. However, by Theorem \ref{thm:sinv}(a), if $K_G(-2,1,-3;0)$ is $2$-slice, then $s(K_B(-2,1,-3;0))\le 2-\sqrt{2}<2$. Thus,  $K(-2, 1, 3)= K_G(-2,1,-3;0)$ is not $2$-slice in any $\#^m\overline{\mathbb{CP}^2}$.
\end{proof}

Although we cannot determine whether $K(-2,1,3)$ is $3$-slice, Proposition \ref{2-slice} gives an example where we can use an $n$-special RBG link to obstruct the $n$-sliceness of a knot. Another example of this type is the $2$-special RBG link in Figure \ref{fig:sdifferent}.

\begin{figure}[h]
{
   \fontsize{9pt}{11pt}\selectfont
   \def\svgwidth{2.5in}
   \begin{center}
\begingroup%
  \makeatletter%
  \providecommand\color[2][]{%
    \errmessage{(Inkscape) Color is used for the text in Inkscape, but the package 'color.sty' is not loaded}%
    \renewcommand\color[2][]{}%
  }%
  \providecommand\transparent[1]{%
    \errmessage{(Inkscape) Transparency is used (non-zero) for the text in Inkscape, but the package 'transparent.sty' is not loaded}%
    \renewcommand\transparent[1]{}%
  }%
  \providecommand\rotatebox[2]{#2}%
  \newcommand*\fsize{\dimexpr\f@size pt\relax}%
  \newcommand*\lineheight[1]{\fontsize{\fsize}{#1\fsize}\selectfont}%
  \ifx\svgwidth\undefined%
    \setlength{\unitlength}{221.22719561bp}%
    \ifx\svgscale\undefined%
      \relax%
    \else%
      \setlength{\unitlength}{\unitlength * \real{\svgscale}}%
    \fi%
  \else%
    \setlength{\unitlength}{\svgwidth}%
  \fi%
  \global\let\svgwidth\undefined%
  \global\let\svgscale\undefined%
  \makeatother%
  \begin{picture}(1,0.63737829)%
    \lineheight{1}%
    \setlength\tabcolsep{0pt}%
    \put(0,0){\includegraphics[width=\unitlength,page=1]{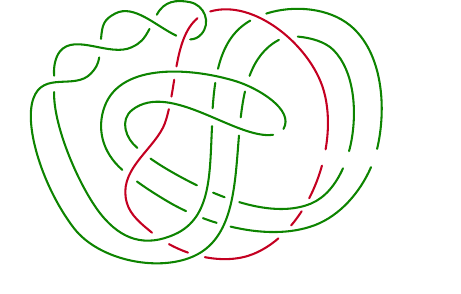}}%
    \put(0.49796958,0.01005401){\color[rgb]{0.76862745,0,0.12941176}\makebox(0,0)[lt]{\lineheight{1.25}\smash{\begin{tabular}[t]{l}$0$\end{tabular}}}}%
    \put(0.92234765,0.38050764){\color[rgb]{0,0.23921569,0.6}\makebox(0,0)[lt]{\lineheight{1.25}\smash{\begin{tabular}[t]{l}$0$\end{tabular}}}}%
    \put(-0.00019598,0.30360862){\color[rgb]{0.05098039,0.51764706,0}\makebox(0,0)[lt]{\lineheight{1.25}\smash{\begin{tabular}[t]{l}$0$\end{tabular}}}}%
    \put(0,0){\includegraphics[width=\unitlength,page=2]{sdifferent.pdf}}%
  \end{picture}%
\endgroup%

   \end{center}
   \caption{A diagram of the $2$-special RBG link $L(-2,1,2;0)$ from the family in Example \ref{eg:luckylink}}
   \label{fig:sdifferent}
}
\end{figure}

\begin{example}\label{sdifferent}
The $2$-special RBG link $L(-2,1,2;0)$ in Figure \ref{fig:sdifferent} gives another example where $s(K_B)\neq s(K_G)$. Using SnapPy \cite{SnapPy}, we recognize $K_G$ as $K9$\verb|_|$533$. Figure \ref{intro} (right) gives a diagram of $K_B$. Using KnotJob \cite{KnotJob}, we compute that $s(K_B)=0$, $s(K_G)=2$. Also, using SnapPy \cite{SnapPy}, we have that $\tau(K_B)=\tau(K_G)=0$.
\end{example}
Using the $2$-RBG link in Figure \ref{fig:sdifferent}, we obtain the following:

\begin{proposition}\label{2-slice2}
The knot $K(-2,1,2)$ in Figure \ref{intro} (right) is not $2$-slice in any $\#^m\overline{\mathbb{CP}^2}$.
\end{proposition}

\begin{proof}
Since $K(-2,1,2)$ is the blue knot $K_B(-2, 1, 2; 0)$ associated to the $2$-special RBG link $L(-2,1,2;0)$ in Example \ref{sdifferent} (see Figure \ref{fig:sdifferent}), we apply Theorem \ref{thm:sinv} (a) with $n=2$. Since $s(K_G)=2$ is larger than $n-\sqrt{n}=2-\sqrt{2}$, the associated knot $K_B$ is not $2$-slice in any $\#^m\overline{\mathbb{CP}^2}$. Thus, $K(-2,1,2)$ is not $2$-slice in any $\#^m\overline{\mathbb{CP}^2}$.
\end{proof}
So far, we have proved Theorem \ref{thm:2slice}, which is a combination of Proposition \ref{2-slice} and Proposition \ref{2-slice2} using Theorem \ref{thm:sinv}. We do not know other proofs of Theorem \ref{thm:2slice}. In general, there are two other approaches to obstruct knots from being slice $n$-slice in any $\#^m\overline{\mathbb{CP}^2}$. The first one is to use the $\tau$ invariant from \cite{OS03}, where Ozsv{\'{a}}th and Szab{\'{o}} proved that the $\tau$ invariant of a knot $K$ satisfies the adjunction inequality in a negative definite $4$-manifold $W$ with $b_1(W)=0$, namely
$$
2\tau(K)\le 2g(\Sigma) - |[\Sigma]|-[\Sigma]\cdot[\Sigma],
$$
where $\Sigma$ is a properly, smoothly embedded surface in $W$ without closed components, such that $\partial\Sigma=K$. Similarly to the proof of Theorem \ref{Kai:s}, we have that if $K$ is $n$-slice in some $\#^m\overline{\mathbb{CP}^2}$, then $2\tau(K)\le n-\sqrt{n}$.
However, we have $s=\tau=0$ for the knots in Figure \ref{intro}, so this method does not apply to Theorem \ref{thm:2slice}. 

The second approach is to use the $d$-invariant from \cite{OSdinv}. Theorem 9.6 in  \cite{OSdinv} gives a lower bound on the $d$-invariants of a rational homology sphere that bounds a definite 4-manifold. Moreover, by Proposition 1.6 in \cite{lensdinv}, we can compute the $d$-invariant of $S_2^3(K)$ by looking at the full knot Floer complex of $K$. A calculation of $\widehat{\text{HFK}}$ with SnapPy \cite{SnapPy} does not quite determine the full knot complexes of the knots in Figure \ref{intro}, but suggests that they could be CFK-equivalent to the unknot. Therefore, this method does not apply either. 

\section{$n$-peculiar RBG links}

\label{npeculiar}
In this section, we consider a different construction of $|n|$-RBG links (that are usually not $n$-special). Let $n\in\mathbb{Z}$ and $r\in\mathbb{Q}$. Consider a two component link $\{(K,n),(\mu_K, r)\}$, where $\mu_K$ is a meridian of $K$. By a Rolfsen twist, there exists a homeomorphism from $S^3_{n,r}(K,\mu_K)$ to $S^3_{n-1/r}(K)$, which restricts to the identity outside a tubular neighborhood of $K$. If  $K=U$ and $n-1/r= 1/t$ for some $t\in \mathbb{Z}$, then $\{(K,n),(\mu_K, r)\}$ is a surgery diagram for $S^3$.

{
\renewcommand{\thetheorem}{\ref{def:pecu}}
\begin{definition}
A link $\{(R,r),(B,b),(G,g)\}$ is called an $n$-peculiar RBG link, if there exists $t\in\mathbb{Z}$ such that
\begin{itemize}
\item $R=U$ and $B,G$ are meridians of $R$,
\item $b=g= 1/r + 1/t$, 
\item $n=(g+b-2l) - t(l-b)^2$, 
\end{itemize}
where $l=lk(B,G)$ under an orientation of $L$ such that $lk(B,R)=lk(G,R)=1$.
\end{definition}
\addtocounter{theorem}{-1}
}
We can read out the diagram of $K_G$ from its RBG link diagram in three steps. Let $\Delta_R$ be a meridian disk of $B$, such that  $\partial\Delta_R = R$. First, isotope $G$ away from $\Delta_R$ by sliding $G$ over $B$. Then, slam dunk $R$ into $B$, and obtain a link diagram where $R$ is deleted and $B$ has framing $1/t$. Finally, blow down the blue component $B$ by Rolfsen twists. We can get a diagram for $K_B$ in a similar way.
\begin{lemma}
An $n$-peculiar RBG link induces an $n$-surgery homeomorphism $\phi: S_n^3(K_B)\rightarrow S_n^3(K_G)$.
\end{lemma}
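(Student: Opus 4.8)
The plan is to establish two things: (i) that an $n$-peculiar RBG link really is an $|n|$-RBG link, so that the construction of Section~\ref{nsurg} applies and furnishes \emph{some} $\langle n\rangle$-surgery homeomorphism $\phi_L$, and (ii) that the two surgery coefficients attached to $\phi_L$ are both equal to $n$ (rather than $-n$). Throughout I write $s=1/r\in\mathbb{Z}$, orient $L$ so that $lk(R,B)=lk(R,G)=1$, put $l=lk(B,G)$, and write $b=g=s+\varepsilon$ with $\varepsilon\in\{+1,-1\}$ according to the two cases of the definition; in both cases $n=\varepsilon\bigl(1-(s-l)^2\bigr)$.

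For (i), note that $R\cup B$ is a Hopf link with $R=U$ carrying coefficient $1/s$. I would perform a Rolfsen twist on $R$: this deletes $R$ and adds $-s$ full twists to the single strands of $B$ and of $G$ that run through a spanning disk of $R$, hence changes the framing of $B$ from $s+\varepsilon$ to $\varepsilon$ and changes $lk(B,G)$ from $l$ to $l-s$. The image of $B$ is still an unknot, now carrying framing $\varepsilon=\pm1$, so it can be blown down; the composite is a homeomorphism $\psi_G\colon S^3_{r,b}(R,B)\to S^3$, and symmetrically one gets $\psi_B\colon S^3_{r,g}(R,G)\to S^3$. This is the same as the three-step recipe stated just before the lemma — the preliminary handle slide there only serves to push $G$ off $\Delta_R$ before the slam-dunk — but the Rolfsen-twist description makes the framing bookkeeping cleaner. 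Applying the same Rolfsen twist to $R$ inside the full diagram presents $S^3_{r,b,g}(R,B,G)$ as surgery on a two-component link with linking matrix $\begin{pmatrix}\varepsilon & l-s\\ l-s & \varepsilon\end{pmatrix}$; since $\pm1$ appears on the diagonal, its Smith normal form is $\operatorname{diag}\bigl(1,\ \varepsilon^2-(l-s)^2\bigr)=\operatorname{diag}\bigl(1,\ 1-(s-l)^2\bigr)$, so $H_1\bigl(S^3_{r,b,g}(R,B,G)\bigr)\cong\mathbb{Z}/(1-(s-l)^2)\mathbb{Z}=\mathbb{Z}/n\mathbb{Z}$. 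Thus the link together with $\psi_B,\psi_G$ is an $|n|$-RBG link, and Section~\ref{nsurg} produces $\phi_L=\widetilde\psi_G\circ\widetilde\psi_B^{-1}\colon S^3_{f_b}(K_B)\to S^3_{f_g}(K_G)$ with $f_b,f_g\in\{n,-n\}$.

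For (ii), I would track framings through the moves of the previous paragraph. Starting from $(G,g)$ with $g=s+\varepsilon$: the Rolfsen twist on $R$ changes the framing by $-s\cdot lk(G,R)^2=-s$, and blowing down the $\varepsilon$-framed image of $B$ changes it by $-\varepsilon\cdot(l-s)^2$, the linking number of $G$ with that unknot being $l-s$. Hence $f_g=(s+\varepsilon)-s-\varepsilon(l-s)^2=\varepsilon\bigl(1-(s-l)^2\bigr)=n$, and the identical computation with $B$ and $G$ interchanged gives $f_b=n$. Therefore $\phi_L\colon S^3_n(K_B)\to S^3_n(K_G)$ is an $n$-surgery homeomorphism.

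The only genuinely delicate point is keeping the sign conventions consistent — the direction of the Rolfsen twist, and the sign of the framing change when blowing down a $(\pm1)$-framed unknot — and checking that both branches of the definition ($b=g=s+1$ with $n=1-(s-l)^2$, and $b=g=s-1$ with $n=-1+(s-l)^2$) funnel into the same conclusion. I expect that bookkeeping to be the main obstacle; the rest is routine Kirby calculus.
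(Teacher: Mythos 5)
Your proof is correct and follows essentially the same route as the paper: both define $\psi_B,\psi_G$ by eliminating $R$ with a Rolfsen twist and then blowing down the resulting $\pm1$-framed unknot, verifying $f_b=f_g=n$ by tracking framings and linking numbers through these moves (the paper inserts a preliminary slide of $G$ over $B$ and organizes the bookkeeping via the $3\times3\to2\times2$ linking matrices, but the computation is the same). Your sign bookkeeping in the $b=g=s-1$ case is in fact cleaner than the paper's displayed formula for that case, which contains a small sign slip, though both arrive at the correct conclusion $f_g=n$.
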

\begin{proof}
We keep track of the linking matrix $M_L$ of the link $L$ under those three diagram changes. We start with
$$M_L=\begin{bmatrix}
r &1 &1 \\
1 &b &l \\
1 &l  &g
\end{bmatrix}.$$
Sliding $G$ over $B$ so that $|\Delta_R\cap G|=0$, the linking matrix becomes $$\begin{bmatrix}
r &1 &0 \\
1 &b &l-b \\
0 &l-b  &g+b-2l
\end{bmatrix}.$$ 
After the slam-dunk of $R$ into $B$, the linking matrix is $$\begin{bmatrix}
b- 1/r &l-b \\
l-b  &g+b-2l
\end{bmatrix}.$$
Now, blow down the blue component by a Rolfsen twist along $B$. The framing of $K_G$ is $$f_g = (g+b-2l) - t(l-b)^2=n.$$
\end{proof}

\begin{figure}[h]
{
   \fontsize{9pt}{11pt}\selectfont
   \def\svgwidth{2.2in}
   \begin{center}
\begingroup%
  \makeatletter%
  \providecommand\color[2][]{%
    \errmessage{(Inkscape) Color is used for the text in Inkscape, but the package 'color.sty' is not loaded}%
    \renewcommand\color[2][]{}%
  }%
  \providecommand\transparent[1]{%
    \errmessage{(Inkscape) Transparency is used (non-zero) for the text in Inkscape, but the package 'transparent.sty' is not loaded}%
    \renewcommand\transparent[1]{}%
  }%
  \providecommand\rotatebox[2]{#2}%
  \newcommand*\fsize{\dimexpr\f@size pt\relax}%
  \newcommand*\lineheight[1]{\fontsize{\fsize}{#1\fsize}\selectfont}%
  \ifx\svgwidth\undefined%
    \setlength{\unitlength}{283.89884663bp}%
    \ifx\svgscale\undefined%
      \relax%
    \else%
      \setlength{\unitlength}{\unitlength * \real{\svgscale}}%
    \fi%
  \else%
    \setlength{\unitlength}{\svgwidth}%
  \fi%
  \global\let\svgwidth\undefined%
  \global\let\svgscale\undefined%
  \makeatother%
  \begin{picture}(1,0.69959973)%
    \lineheight{1}%
    \setlength\tabcolsep{0pt}%
    \put(0,0){\includegraphics[width=\unitlength,page=1]{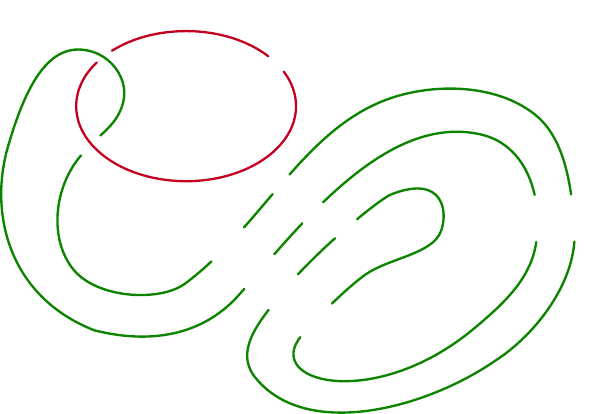}}%
    \put(0.27165122,0.67520448){\color[rgb]{0.76862745,0,0.1254902}\makebox(0,0)[lt]{\lineheight{1.25}\smash{\begin{tabular}[t]{l}$1/2$\end{tabular}}}}%
    \put(0.05010016,0.10303363){\color[rgb]{0.05098039,0.51764706,0}\makebox(0,0)[lt]{\lineheight{1.25}\smash{\begin{tabular}[t]{l}$1$\end{tabular}}}}%
    \put(0.80462447,0.66458648){\color[rgb]{0,0.23137255,0.6}\makebox(0,0)[lt]{\lineheight{1.25}\smash{\begin{tabular}[t]{l}$1$\end{tabular}}}}%
    \put(0,0){\includegraphics[width=\unitlength,page=2]{0peculiar.pdf}}%
    \put(0.44187357,0.20412791){\color[rgb]{0.05098039,0.51764706,0}\makebox(0,0)[lt]{\lineheight{1.25}\smash{\begin{tabular}[t]{l}$b$\end{tabular}}}}%
    \put(0.91879275,0.31554923){\color[rgb]{0.05098039,0.51764706,0}\makebox(0,0)[lt]{\lineheight{1.25}\smash{\begin{tabular}[t]{l}$a$\end{tabular}}}}%
  \end{picture}%
\endgroup%

   \end{center}
   \caption{A family of $0$-peculiar links}
   \label{0pecu_s}
}
\end{figure}
\begin{example}
Consider the family of peculiar RBG links in Figure \ref{0pecu_s}, parametrized by two twisting boxes. Since $l=1$ and $b=g=1$, we have $n=0$. When $(a,b)$ is $(2,-1)$ or $(3, -2)$, the peculiar link $L(a,b)$ generates a knot pair such that $s(K_G) = -2$ and $s(K_B) = 0$. (When $(a,b)=(2,-1)$, the knot $K_B$ is $11_{270}$.) However, since the signature of $K_B$ is $2$ in each case, the knots $K_B$ are not $H$-slice in any $\#^m\mathbb{CP}^2$, so the knot pairs do not produce any exotic $\#^m\mathbb{CP}^2$.
\end{example}

\bibliography{n-surgery}
\bibliographystyle{custom}
\end{document}